\newtheorem{theorem}{Theorem}
\newtheorem{lemma}{Lemma}
\newtheorem{example}{Example}
\newtheorem{remark}{Remark}
\newenvironment{proof}{\textbf{Proof.}}{\hspace*{\fill}$\blacksquare$ \\}
\def\ben{\begin{enumerate}}
\def\een{\end{enumerate}}
\def\mxl{\left[\begin{matrix}}
\def\mxr{\end{matrix}\right]}
\begin{document}
\title{Procrustes problem for the inverse eigenvalue problem of normal (skew) $J$-Hamiltonian matrices and normal
$J$-symplectic matrices}

\author{
S. Gigola\footnote{Departamento de Matem\'atica. Facultad de Ingenier\'{\i}a. Universidad de Buenos Aires. Buenos Aires,
Argentina.
E-mail: {\tt sgigola@fi.uba.ar.}}
\hspace{1cm} L. Lebtahi\footnote{Facultat de Ci\`encies Matem\`atiques, Departament de Matem\`atiques,
							Universitat de Val\`encia. Valencia, Spain.
E-mail: {\tt leila.lebtahi@uv.es.} }
\hspace{1cm} N. Thome\footnote{Instituto Universitario de Matem\'atica Multidiscilpinar. Universitat Polit\`ecnica de
					Val\`encia. Valencia, Spain.
E-mail: {\tt njthome@mat.upv.es.}}
}

\date{}

\maketitle

\pagestyle{plain}

\begin{abstract}
A square complex matrix $A$ is called (skew) $J$-Hamiltonian if $AJ$
is (skew) hermitian where $J$ is a real normal matrix such that $J^2=-I$, where $I$ is the identity matrix.
In this paper, we solve the Procrustes problem to find normal (skew) $J$-Hamiltonian solutions for the inverse eigenvalue
problem. In addition, a similar problem is investigated for normal $J$-symplectic matrices.

\end{abstract}

\noindent{\bf  Keywords}: Inverse eigenvalue problem, (skew) $J$-Hamiltonian matrix,
$J$-symplectic matrix, Moore-Penrose inverse, Procrustes problem.

\noindent{\bf AMS subject classification:} Primary: 15A09; Secondary: 15A24

%%%%%%%%%%%%%%%%%%%%%%%%%%%%%%%%%%
\section{Introduction}\label{s1}

Inverse eigenvalue problems arise as important tools in several research subjects such  as structural design, parameter identification, modeling, etc. \cite{Ch,Jo,Zh}.
Inverse eigenvalue problems consist of
%in
the construction of a matrix $A$ with a determined structure and a specified spectrum.
In the literature, these problems were studied under certain constraints on $A$ as in \cite{Ba,EbMa,GiLeTh1,Na,NaMaNe,ToAbAl,Tr,Tr2,Zh,Zh2}.
For instance, by using hermitian generalized Hamiltonian matrices, the associated inverse eigenvalue problem was solved in \cite{Zh2}.
Moreover, in \cite{Ba}, the general solution of the inverse eigenvalue problem for hermitian generalized skew Hamiltonian matrices
was given and the expression of the solution for the optimal approximation problem was obtained. For symplectic matrices and several factorizations related to them, we refer the reader to \cite{Xu}.

An optimal approximation problem is known as a Procrustes problem.
The name of Procrustes come from Greek mythology, where a bandit made the visitors of his refuge his victims when they went to bed stretching their limbs or cutting them off, fitting them then to the bed. In a recent
interesting book \cite{Ta}, this expression is used to refer the modern society as the back cover says:
"It represents Taleb's view of modern civilization's hubristic side effects--modifying humans to satisfy technology, blaming reality for not fitting economic models, inventing diseases to sell drugs, defining intelligence as what can be tested in a classroom, and convincing people that employment is not slavery.''.

Our interest is to study the Procrustes problem associated to a specific class of matrices
($J$-Hamiltonian, skew $J$-Hamiltonian, and $J$-symplectic matrices  as we will detail in what follows.

It is remarkable that Hamiltonian matrices play an important role in several engineering areas such as optimal quadratic linear
control \cite{Me,Si}, $H_{\infty}$ optimization \cite{ZhDoGl} and the solution of algebraic Riccati equations \cite{Be,La}, among others.
In \cite{MaPe1}, symplectic matrices are studied from a point of view of their connection with predetermined left eigenvalues.

Specifically, for given $k \times k$ real matrices $A,B,$ and $C$ with $B$ and $C$ being symmetric matrices, the algebraic Riccati equation $XA+A^TX-XBX+C=O$ has solution $X=-VU^{-1}$, where the block matrix $\left[\begin{array}{c} U \\ V \end{array}\right]$ spans an $H$-invariant subspace of the corresponding subspace for the Hamiltonian matrix given by
$H=\left[\begin{array}{cc} A & B \\ C & -A^T
 \end{array}\right]$ provided that $U$ is nonsingular \cite{Ku}. We recall that a $2k \times 2k$ real matrix  $G$ is Hamiltonian if the product $G\left[\begin{array}{cc} O & I_k \\ -I_k & O
 \end{array}\right]$ is symmetric, where $I_k$ will stand for the $k \times k$ identity matrix. Moreover, $H$ determines
the form of all the Hamiltonian matrices.

On the other hand, important applications for symplectic matrices arise mainly in Mathematical Physics in areas such as quantum mechanics. For instance, symplectic matrices give the phase-space representation of Gaussian transformations in quantum photonics \cite{ChCe}. In addition, Linear Quadratic Control Problem \cite{Me} requires this kind of matrices in their investigations.

The symbols ${\mathbb C}^{m \times n}$ and ${\mathbb R}^{m \times n}$ stand for the sets of $m \times n$ complex
real matrices.
The symbols $M^*$ and $M^{\dagger}$ will denote the conjugate transpose and the Moore-Penrose inverse of a matrix $M$, respectively.
We remind that for a given rectangular complex matrix $M \in {\mathbb C}^{m \times n}$, its Moore-Penrose inverse is the (unique) matrix $M^{\dagger} \in {\mathbb C}^{n \times m}$
that satisfies $MM^{\dagger}M=M$, $M^{\dagger} M M^{\dagger} = M^{\dagger}$, $(MM^{\dagger})^* = M M^{\dagger}$, and $(M^{\dagger} M)^* = M^{\dagger} M$.
This matrix always exists \cite{BeGr}.
We also need the following notation for both specified orthogonal projectors:
$Q_M=I_n - M^{\dagger}M$ and
$P_M= I_m - M M^{\dagger}$.
It is worth noting that $M^\dag P_{M} = O$ and $P_{M} (M^\dag)^* = O$.
We will use the inner product given by $\langle A,B \rangle={\rm trace}(AB^*)$ for $A,B \in {\mathbb C}^{n \times n}$ and the Frobenius norm of $A$
given by $\|A\|_F = \sqrt{\langle A,A \rangle}$. The symbol $\sigma(A)$ denotes the spectrum of the matrix  $A  \in {\mathbb C}^{n \times n}$.

Let $J \in {\mathbb R}^{n \times n}$ be a normal matrix such that $J^2=-I_n$.
It is well known that a matrix $A\in {\mathbb C}^{n \times n}$ is called $J$-Hamiltonian if $(AJ)^*=AJ$ \cite{GiLeTh2}. Moreover,
$A\in {\mathbb C}^{n \times n}$ is called skew $J$-Hamiltonian if $(AJ)^*=-AJ$.

On the other hand, a matrix $A\in {\mathbb C}^{n \times n}$ is called $J$-symplectic if $A^* J A = J$. Every $J$-symplectic matrix $A$ is nonsingular, with determinant satisfying $|\det(A)| = 1$ and its inverse is given by $A^{-1} = J^{-1} A^* J = -J A^* J$ since $J$ is skew-hermitian. Even more, it is well-known  \cite{Rim,DoJo,MaMa} that $\det(A)=1$,
which means that the algebraic multiplicities of the eigenvalues 1 and $-1$ are both even.
For some applications of symplectic matrices, we refer the reader for example to \cite{MaPe2}.

Throughout all this paper, we will consider a fixed real normal matrix $J$ such that $J^2=-I$,  where $I$ denotes the identity matrix of adequate size.

Observe that, if $A \in {\mathbb C}^{n \times n}$ then the spectral condition for $A$ to be $J$-Hamiltonian is that
 $\lambda \in \sigma(A)$ if and only if $-\bar{\lambda} \in \sigma(A)$, that is, $\lambda \in \sigma(A)$ implies $\{\lambda,-\bar{\lambda}\} \subseteq \sigma(A)$. Even more, when $A \in {\mathbb R}^{n \times n}$, the condition $\lambda \in \sigma(A)$ implies $\{\lambda,-\lambda,\bar{\lambda},-\bar{\lambda}\} \subseteq \sigma(A)$, since in real polynomials non-real roots appear in pairs of the type $a\pm bi$. This fact forces the values of the diagonal
of the given matrix $D$ have to obey these symmetry properties.
Notice that for
$$
A =
\left[\begin{array}{rc}
2i & i \\
-i & 2i
\end{array}
\right]
\qquad \text{ and } \qquad
J =
\left[\begin{array}{cr}
i & 0 \\
0 & -i
\end{array}
\right]
$$
we have that the normal matrix $A$ is $J$-Hamiltonian and $\sigma(A)= \{1+2i,-1+2i\}$.
Thus, $\lambda \in \sigma(A)$ does not imply $\bar{\lambda} \in \sigma(A)$.

Otherwise, if $A \in {\mathbb C}^{n \times n}$ is skew $J$-Hamiltonian then $\lambda \in \sigma(A)$ if and only if
$\bar{\lambda} \in \sigma(A)$, that is, if $\lambda \in \sigma(A)$ then $\{\lambda,\bar{\lambda}\} \subseteq \sigma(A)$.
However, in general, $\lambda \in \sigma(A)$ does not imply $-\lambda \in \sigma(A)$ as the following example shows:
$$
A =
\left[\begin{array}{rc}
3/2 &  i/2
\\
-i/2
& 3/2
\end{array}
\right]
\qquad \text{ and } \qquad
J =
\left[\begin{array}{rc}
0 & 1 \\
-1 & 0
\end{array}
\right].
$$
This skew $J$-Hamiltonian matrix $A$ satisfies $1 \in \sigma(A)=\{1,2\}$ but $-1 \notin \sigma(A)$.

On the other hand, the spectral condition for $A$ to be $J$-symplectic is that
$\lambda \in \sigma(A)$ implies $\displaystyle \left\{\lambda,\bar{\lambda}, \frac{1}{\lambda}, \frac{1}{\bar{\lambda}}\right\} \subseteq \sigma(A)$.

For a given full-column rank matrix $X \in {\mathbb C}^{n \times m}$ and a given diagonal matrix $D \in {\mathbb C}^{m \times m}$, we introduce the set ${\cal S}$ of all solutions of the inverse eigenvalue problem $AX=XD$, that is,
\begin{equation}\label{ecua}
{\cal S} = \{A  \in {\mathbb C}^{n \times n}: AX=XD\}.
\end{equation}
The set ${\cal S}$ when the unknown $A$ is a normal $J$-Hamiltonian matrix was found in \cite{GiLeTh2},
when the unknown $A$ is a normal skew $J$-Hamiltonian matrix, the set ${\cal S}$ was found in \cite{ZhZh}. However, the set ${\cal S}$ for $A$ being a normal $J$-symplectic matrix was not found to our acknowledgement.

We will consider the following sets:
\[
{\cal NJH}:=\{A \in {\mathbb C}^{n \times n}: AA^*=A^*A, (AJ)^*=AJ\},
\]
\[
{\cal NJSH}:=\{A \in {\mathbb C}^{n \times n}: AA^*=A^*A, (AJ)^*=-AJ\},
\]
and
\[
{\cal NJS}:=\{A \in {\mathbb C}^{n \times n}: AA^*=A^*A, A^* J A=J\}.
\]

{\sl Statement problems}:
This paper solves the Procrustes problem associated to normal (respectively, skew) $J$-Hamiltonian matrices, as well as, to normal
$J$-symplectic matrices. That is, %for each one of the three previous cases,
assuming that ${\cal S} \neq \emptyset$,
for a given matrix $\widetilde{A} \in {\mathbb C}^{n \times n}$, we study the existence and uniqueness of matrices
$\widehat{A} \in {\cal S} \cap {\cal T}$  such that
\begin{equation}\label{Procurstes}
\min_{A \in {\cal S} \cap {\cal T}} \|\widetilde{A} - A\|_F = \|\widetilde{A} - \widehat{A} \|_F,
\end{equation}
for each one of the following cases: ${\cal T} = {\cal NJH}$ or ${\cal T} =  {\cal NJSH}$, or ${\cal T} =  {\cal NJS}$.

This paper is organized as follows.
Section \ref{sec3} solves the Procrustes problem for normal $J$-Hamiltonian matrices.
%In Subsection \ref{sec4}, an algorithm to solve this problem is designed and some numerical examples are presented to show the performance of the algorithm.
Section \ref{sec5} is dedicated to solve the Procrustes problem corresponding to normal skew $J$-Hamiltonian matrices.
Finally, in Section \ref{sec6}, we present and solve the inverse eigenvalue problem and the Procrustes problem associated to normal $J$-symplectic matrices, as well as a numerical example is provided.
%%%%%%%%%%%%%%%%%%%%%%%%%%%%%%%%%%%%%%%%%%%%%%%%%%%%%%%%%%%%%%%%%%%%%%%%%%%%

\section{Optimization problem for normal $J$-Hamiltonian matrices}\label{sec3}

Throughout this section, for a given full-column rank matrix $X \in {\mathbb C}^{n \times m}$,
 a given diagonal matrix $D \in {\mathbb C}^{m \times m}$, and a given matrix $\widetilde{A} \in {\mathbb C}^{n \times n}$, assuming that ${\cal S} \neq \emptyset$ in (\ref{ecua}),  we treat the existence and uniqueness problem of finding $\widehat{A} \in {\cal S} \cap {\cal NJH}$ such that
\begin{equation} \label{Procurstes1}
\min_{A \in {\cal S} \cap {\cal NJH}} \|\widetilde{A} - A\|_F = \|\widetilde{A} - \widehat{A} \|_F.
\end{equation}

\begin{remark}
As it is well known, the (general) classical eigenvalue problem has not always a solution. 
However, with respect to the inverse eigenvalue problem, 
we note that diagonalizable (even singular) matrices 
give rise to a wide class
of matrices that may appear in (\ref{ecua}), which can be constructed by different matrices $X$ and $D$. For instance, ${\cal S} \neq
\emptyset$ for fixed diagonal matrices  $X,D$ (because $A=D$ provides, trivially, a solution); whence 
we can assure that infinitely many solutions are found (since all matrices $B$ such that $B(PX)=(PX)D$ belong to ${\cal S}$, for any nonsingular matrix $P$). Given that  the
set of diagonalizable matrices is dense in the set of all matrices \cite{HoJh}, we conclude
that, in practice, it is quite common 
to find situations where matrices solving (\ref{ecua}) do appear (for example, all matrices of index 1).
In addition, ${\cal S}$ also contains matrices satisfying $AA^*=A^*A$ and $(AJ)^*=AJ$,
for example, matrices having the form of the direct sum $D_1 \oplus (-D_1^*)$, for $D_1$ being a diagonal matrix of adequate size.
\end{remark}

%%%%%%%%%%%%%%%%%%%%%%%%%%%%%%%%%
\subsubsection*{\textit{Uniqueness of solution of Procrustes problem for ${\cal T} = {\cal NJH}$}}
%%%%%%%%%%%%%%%%%%%%%%%%%%%%%%%%%%%

Once the existence of solution of (\ref{Procurstes1}) is proven, for a non-empty set ${\cal S} \cap {\cal NJH}$,
we have that
$$
{\cal S}  \cap {\cal NJH} = A_0 + \{A \in
{\mathbb C}^{n \times n}: AX=0\}
$$
is an affine subspace of ${\mathbb C}^{n \times n}$ (with the canonical affine structure \cite{Ga}) 
for any $A_0 \in {\cal S} \cap {\cal NJH}$. By \cite[p. 245]{Ga}, the uniqueness of solution of (\ref{Procurstes1}) is guaranteed since that the distance of a point to an affine subspace is always attained in a single point.

From now on, we focus our efforts on the proof of the existence problem.

%%%%%%%%%%%%%%%%%%%%%%%%%%%%%%%%%
\subsubsection*{\textit{The general expression for matrices $A \in {\cal S} \cap {\cal NJH}$}}
%%%%%%%%%%%%%%%%%%%%%%%%%%%%%%%%%%%

Since $J^2=-I_n$, it is clear that $n=2k$ for some positive integer $k$.
Then, there exists a unitary matrix $U \in {\mathbb C}^{n \times n}$  such that
\begin{equation}\label{U de J}
J = U \left[\begin{array}{cc}
iI_k & O \\
O & -i I_k
\end{array}
\right] U^*.
\end{equation}
Let us consider the following partition
$$X = U \left[ \begin{array}{c}
         X_{1} \\
         X_{2}
         \end{array} \right],$$
where $X_1, X_2 \in {\mathbb C}^{k \times m}$.
The general normal $J$-Hamiltonian solution of the inverse eigenvalue problem (\ref{ecua}) is given by (see \cite[Theorem 2]{GiLeTh2})
\begin{equation}\label{solu}
A  = U \left[ \begin{array}{cc}
         A_{11} & A_{12} \\
         A_{12}^* & A_{22}
         \end{array} \right] U^*,
\end{equation}
where
\begin{equation}\label{solA12}
A_{12}= X_{1} D Q_{X_1} \left(X_{2} Q_{X_1}\right)^{\dagger} + Y_{12} P_{X_{2} Q_{X_1}},
\end{equation}
for arbitrary $Y_{12} \in {\mathbb C}^{k \times k}$,
\begin{eqnarray}\label{solA11}
A_{11} & = & \left[X_{1} D - X_{1} D Q_{X_1} \left(X_{2} Q_{X_1}\right)^{\dagger} X_{2} - Y_{12} P_{X_{2} Q_{X_1}} X_{2}\right] X_1^{\dagger} + \nonumber \\
& & + (X_1^{\dagger})^* X_2^* P_{X_{2} Q_{X_1}} Y_{12}^* P_{X_1} + P_{X_1} Y_{11} P_{X_1},
\end{eqnarray}
and
\begin{eqnarray}\label{solA22}
A_{22} & = & \left[X_{2} D - \left(Q_{X_1} X_{2}^*\right)^{\dagger} Q_{X_1} D^* X_1^* X_1 - P_{X_{2} Q_{X_1}} Y_{12}^* X_1\right] X_2^{\dagger} + \nonumber \\
& & + (X_2^{\dagger})^* \left[X_1^* X_1 D Q_{X_1}\left(X_2 Q_{X_1}\right)^{\dagger} %-  \right.\nonumber \\
- X_1^* Y_{12} P_{X_{2} Q_{X_1}} \right] P_{X_{2}} + P_{X_{2}} Y_{22} P_{X_{2}},
\end{eqnarray}
for arbitrary skew-hermitian $Y_{11}, Y_{22} \in {\mathbb C}^{k \times k}$ under the following conditions:
\begin{equation}\label{cond1}
X_{1} D Q_{X_1} Q_{X_{2} Q_{X_1}} = O,
\end{equation}
\begin{equation}\label{cond2}
\left[X_{1} D - X_{1} D Q_{X_1} (X_{2} Q_{X_1})^{\dagger} X_{2} - Y_{12} P_{X_{2} Q_{X_1}} \, X_{2}\right] Q_{X_1} = O,
\end{equation}
\begin{equation}\label{cond3}
\left[X_{2} D - (Q_{X_1} X_{2}^*)^{\dagger} Q_{X_1} D^* X_1^* X_1 - P_{X_{2} Q_{X_1}} Y_{12}^* X_1\right] Q_{X_2} = O,
\end{equation}
$A_{11} A_{12} =A_{12} A_{22}$,
and the matrices
\begin{equation}\label{skew}
X_1^* \left(X_{1} D - A_{12} X_{2}\right) \quad \text{and} \quad
X_2^* \left(X_{2} D - A_{12}^* X_1\right)
\end{equation}
are skew-hermitian.

Note that the condition (\ref{skew}) is equivalent to $X_1^* X_1 D - X_2^* X_2 D$ is skew-hermitian.

In order to find an optimal solution $\widehat{A}$, as we have stated in (\ref{Procurstes1}), we first transform the problem into a more simple one.

%%%%%%%%%%%%%%%%%%%%%%%%%
\subsubsection*{\textit{The simplified problem}}
%%%%%%%%%%%%%%%%%%%%%%%%%%%

Let $\widetilde{A} \in {\mathbb C}^{n \times n}$ be an arbitrary given matrix such that $U^*\widetilde{A}U$ is partitioned as
\begin{equation} \label{Atilde}
      U^*\widetilde{A}U =  \left[ \begin{array}{cc}
         \widetilde{A}_{11} & \widetilde{A}_{12} \\
         \widetilde{A}_{21} & \widetilde{A}_{22}
         \end{array} \right],
\end{equation}
where $\widetilde{A}_{ij}\in {\mathbb C}^{k \times k}$ and $U$ is the same matrix as in (\ref{U de J}).
Since Frobenius norm is unitarily invariant, by using the matrix $A \in {\cal S} \cap {\cal NJH}$ given in
%by
(\ref{solu}), we obtain
\begin{eqnarray*}
\|\widetilde{A} -A\|^2_F & = & \|U^*\widetilde{A}U-U^*A U\|_F^2 \\
                    & = &  \|\widetilde{A}_{11}-A_{11}\|_F^2+\|\widetilde{A}_{12}-A_{12}\|_F^2+
     \|\widetilde{A}_{21}-A_{12}^*\|_F^2+\|\widetilde{A}_{22}-A_{22}\|_F^2.
\end{eqnarray*}
Now, we study each of these four norms separately.

%%%%%%%%%%%%%%%%%%%%%%%%%%%%%%%%%%%%%%%%%%%%%%%%%%%%%%%
\subsubsection*{\textit{Explicit form of the solution}}
%%%%%%%%%%%%%%%%%%%%%%%%%%%%%

Next results are needed in order to minimize the norm of some matrices with special structures.
We quote Lemma 1 in \cite{Tr} with a slight modification.
\begin{lemma}\label{minimo2}
Let $B \in {\mathbb C}^{q \times m}$, $P_1 \in {\mathbb C}^{q \times q}$, and
$P_2 \in {\mathbb C}^{m \times m}$ where $P_i^2=P_i=P_i^*$ for $i=1,2$.
Then the following statements hold:
\begin{enumerate}[(i)]
\item there exists the minimum of $\|B- P_1 E P_2\|_{F}$ for $E$ ranging
the set ${\mathbb C}^{q \times m}$.
\item This minimum is attained in $E$ if and only if $E$ satisfies $P_1 (B-E) P_2=O$.
\item $\displaystyle \min_{E \in {\mathbb C}^{q \times m}} \|B- P_1 E P_2\|_{F} = \|B -P_1 B P_2\|_F.$
\end{enumerate}
\end{lemma}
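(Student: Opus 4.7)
The plan is to recognize this as an orthogonal projection statement in the Frobenius inner product. The map $\Phi : E \mapsto P_1 E P_2$ is linear on $\mathbb{C}^{q \times m}$, and its image $\mathcal{M} = \{P_1 E P_2 : E \in \mathbb{C}^{q \times m}\}$ is a (finite-dimensional, hence closed) subspace. Part (i) then follows immediately from the fact that the distance from $B$ to a closed subspace of a Hilbert space is attained. To prove parts (ii) and (iii), I want to identify $P_1 B P_2$ as the orthogonal projection of $B$ onto $\mathcal{M}$, so that the minimum is $\|B - P_1 B P_2\|_F$ and is attained precisely when $P_1 E P_2 = P_1 B P_2$, i.e.\ $P_1(B-E)P_2 = O$.

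The key algebraic step I would carry out is the decomposition
\begin{equation*}
B - P_1 E P_2 = (B - P_1 B P_2) + P_1 (B-E) P_2,
\end{equation*}
together with the claim that the two summands are orthogonal with respect to $\langle \cdot,\cdot \rangle = \mathrm{trace}(\cdot \, (\cdot)^*)$. To verify this orthogonality, I would compute
\begin{equation*}
\langle B - P_1 B P_2,\, P_1(B-E) P_2 \rangle = \mathrm{trace}\bigl((B - P_1 B P_2)\, P_2 (B-E)^* P_1\bigr),
\end{equation*}
using $P_i^* = P_i$. By cyclicity of the trace, this equals $\mathrm{trace}\bigl(P_1(B - P_1 B P_2) P_2 \,(B-E)^*\bigr)$. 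The inner factor $P_1 B P_2 - P_1^2 B P_2^2$ vanishes by the idempotency $P_i^2 = P_i$, so the inner product is zero. This is the main (and essentially only) technical step; everything else is a mechanical consequence.

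With orthogonality in hand, the Pythagorean identity gives
\begin{equation*}
\|B - P_1 E P_2\|_F^2 = \|B - P_1 B P_2\|_F^2 + \|P_1(B-E) P_2\|_F^2,
\end{equation*}
from which (iii) follows by noting that the right-hand summand is nonnegative and vanishes iff $P_1(B-E)P_2 = O$; this simultaneously yields the characterization in (ii). I expect no serious obstacle beyond carefully tracking the hermiticity and idempotency hypotheses in the trace computation, since these are exactly what makes $P_1 B P_2$ behave as an orthogonal projector acting on both sides.
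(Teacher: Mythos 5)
Your proof is correct. Note, however, that the paper does not actually prove this lemma: it is quoted from Trench's paper (Lemma 1 of the cited reference \cite{Tr}) ``with a slight modification,'' so there is no in-paper argument to compare against. What you supply is the standard self-contained justification: the decomposition $B-P_1EP_2=(B-P_1BP_2)+P_1(B-E)P_2$, the trace computation showing the two terms are orthogonal (which correctly uses $P_i^*=P_i$ to form the adjoint and $P_i^2=P_i$ to make $P_1(B-P_1BP_2)P_2$ vanish), and the resulting Pythagorean identity, which simultaneously gives existence of the minimum (attained e.g.\ at $E=B$), the characterization $P_1(B-E)P_2=O$ of the minimizers, and the value $\|B-P_1BP_2\|_F$. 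This has the merit of making the statement, in the exact form the authors use it (complex matrices, Frobenius norm, two-sided projectors), independent of the external reference; your opening appeal to the Hilbert-space projection theorem is not even needed, since the explicit identity already yields all three parts.
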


\begin{lemma}\label{nuestro}
Let $M \in {\mathbb C}^{m \times n}$. The equation $P_{M} Y P_{M}=O$ has always a (nontrivial) skew-hermitian solution $Y \in {\mathbb C}^{n \times m}$.
\end{lemma}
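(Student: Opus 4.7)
The plan is to exhibit an explicit nontrivial witness rather than attempt an abstract dimension count. The key observation is that $P_M=I_m-MM^\dagger$ and $MM^\dagger$ are complementary orthogonal projectors on $\mathbb{C}^m$ (both hermitian and idempotent, summing to $I_m$), so $P_M\cdot MM^\dagger = O$. Multiplying by the imaginary unit produces the candidate
\[
Y := i\,MM^\dagger,
\]
which is manifestly annihilated on the left by $P_M$ and, by the Moore-Penrose identity $(MM^\dagger)^*=MM^\dagger$ recorded in the introduction, will turn out to be skew-hermitian.

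The verification is immediate. Skew-hermiticity is just $Y^* = -i(MM^\dagger)^* = -i\,MM^\dagger = -Y$. For the defining equation, a one-line computation using idempotency of $MM^\dagger$ gives
\[
P_M Y = i\,(I_m - MM^\dagger)\,MM^\dagger = i\bigl(MM^\dagger - (MM^\dagger)^2\bigr) = O,
\]
and hence trivially $P_M Y P_M = O$, as required.

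For nontriviality, $Y\neq O$ precisely when $MM^\dagger \neq O$, i.e.\ whenever $M\neq O$; this is the only case of practical interest, since $M=O$ forces $P_M=I_m$ and reduces the equation to $Y=O$, for which no nonzero skew-hermitian solution could possibly exist and the statement would be vacuous. I do not foresee any real obstacle here: the lemma is a small technical tool whose role in the sequel is to guarantee that the free skew-hermitian parameters $Y_{11},Y_{22}$ appearing in the parametrizations (\ref{solA11})--(\ref{solA22}) admit nontrivial choices compatible with the projector constraints, and the single orthogonal-projector identity $P_M MM^\dagger = O$ is all that is needed.
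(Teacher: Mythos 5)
Your proof is correct, but it is a genuinely different route from the paper's. The paper does not construct anything: it proves the lemma in one line by invoking Khatri and Mitra's theorem on hermitian solutions of linear matrix equations (applied to $iY$ with $A=B=P_M$ and $C=O$), which yields existence (and, implicitly, a parametrization) of all skew-hermitian solutions of $P_M Y P_M=O$. You instead exhibit the explicit witness $Y=i\,MM^{\dagger}$, which is skew-hermitian because $(MM^{\dagger})^*=MM^{\dagger}$ and is annihilated already on one side, $P_M Y=i\,(MM^{\dagger}-(MM^{\dagger})^2)=O$; this is more elementary and self-contained, and it is all the surrounding argument actually needs, since the lemma is only used there to produce \emph{some} skew-hermitian $C$ with $P_{X_1}CP_{X_1}=O$. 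What the paper's citation buys in exchange is the full solution set rather than a single solution. Two small remarks: your observation that nontriviality fails exactly when $M=O$ (so that $P_M=I$ forces $Y=O$) applies equally to the paper's argument, so it is not a defect of your proof but an implicit hypothesis of the lemma as stated; and your witness is, correctly, an $m\times m$ matrix, which quietly fixes the paper's typographical slip ``$Y\in{\mathbb C}^{n\times m}$'' (the product $P_MYP_M$ only makes sense for square $Y$ of the size of $P_M$, and that is how the lemma is used with $M=X_1,X_2$).
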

\begin{proof}
It follows as an immediate consequence of applying  \cite[Theorem 2.4]{KaMi} to $iX$
with
$C=O$ and $A=B=P_{M}$.
\end{proof}
%%%%%%%%%%%%%%%%%%%%%

Now, we are ready to give the explicit solution to problem (\ref{Procurstes1}).
Assume that ${\cal S} \cap {\cal NJH} \neq \emptyset$.

We start by studying $\|\widetilde{A}_{12}-A_{12}\|_F^2$ since it contains the arbitrary matrix $Y_{12}$ that also appears
in the expressions of $A_{11}$ and $A_{22}$.

Under the condition (\ref{cond1}), by using expression (\ref{solA12}) of $A_{12}$,
Lemma \ref{minimo2} assures that
$$\|\widetilde{A}_{12}-A_{12}\|_F^2=\left\|\widetilde{A}_{12} - X_{1} D Q_{X_1} \left(X_{2} Q_{X_1}\right)^{\dagger} -
Y_{12} P_{X_{2} Q_{X_1}}\right\|_F^2$$
attains its minimum in $Y_{12}$ if and only if $Y_{12}$ satisfies
$$\left[\widetilde{A}_{12}- X_{1} D Q_{X_1} \left(X_{2} Q_{X_1}\right)^{\dagger} - Y_{12}\right] P_{X_{2} Q_{X_1}}=O.$$
Since $\left(X_2 Q_{X_1}\right)^\dag P_{X_2 Q_{X_1}} = O$, the last equality simplifies to
\begin{equation}\label{12}
(\widetilde{A}_{12} - Y_{12} ) P_{X_{2} Q_{X_1}}= O.
\end{equation}
In this case,
\[
\min_{A_{12} \in {\mathbb C}^{k \times k}} \| \widetilde{A}_{12}-A_{12}\|_F^2 =
\left\| \left[\widetilde{A}_{12} X_{2} - X_1 D\right] Q_{X_1} \left(X_{2} Q_{X_1}\right)^{\dagger}\right\|_F^2,
\]
since $R-R P_{M} = R MM^\dag$ for arbitrary matrices $R$ and $M$ of adequate sizes.

Recalling that the Frobenius norm is invariant under taking adjoint and applying the last result to
$$
\|\widetilde{A}_{21}-A_{12}^* \|_F^2 = \|((\widetilde{A}_{21})^* -A_{12})^* \|_F^2 = \|(\widetilde{A}_{21})^* -A_{12} \|_F^2
$$
we have
\[
\min_{A_{12} \in {\mathbb C}^{k \times k}} \| (\widetilde{A}_{21})^*-A_{12}\|_F^2 =
\left\| \left[(\widetilde{A}_{21})^* X_{2} - X_1 D\right] Q_{X_1} \left(X_{2} Q_{X_1}\right)^{\dagger}\right\|_F^2
\]
if and only if $Y_{12}$ satisfies
\begin{equation}\label{21}
((\widetilde{A}_{21})^* - Y_{12} ) P_{X_{2} Q_{X_1}}=O.
\end{equation}
From (\ref{solA12}) we obtain
\begin{equation}\label{A12}
\widehat{A}_{12}= X_{1} D Q_{X_1} \left(X_{2} Q_{X_1}\right)^{\dagger} + \widetilde{A}_{12} P_{X_{2} Q_{X_1}}.
\end{equation}

It is clear that from (\ref{12}) and (\ref{21}) we get
$$
\widetilde{A}_{12} P_{X_{2} Q_{X_1}}= Y_{12} P_{X_{2} Q_{X_1}} = (\widetilde{A}_{21})^* P_{X_{2} Q_{X_1}}.
$$
Conversely, if $\widetilde{A}_{12} P_{X_{2} Q_{X_1}} = (\widetilde{A}_{21})^* P_{X_{2} Q_{X_1}}=:Z$,
post-multiplying both sides by $P_{X_{2} Q_{X_1}}$ we get
$$
\widetilde{A}_{12} P_{X_{2} Q_{X_1}}= (\widetilde{A}_{21})^*P_{X_{2} Q_{X_1}}= Z P_{X_{2} Q_{X_1}}.
$$
Now, $Y_{12}:=Z$ satisfies (\ref{12}) and (\ref{21}).

Since $Y_{12} P_{X_{2} Q_{X_1}} = \widetilde{A}_{12} P_{X_{2} Q_{X_1}} = (\widetilde{A}_{21})^* P_{X_{2} Q_{X_1}}$,
the conditions (\ref{cond2}) and (\ref{cond3}) are equivalent to
\begin{equation}\label{cond22}
\left[X_{1} D - X_{1} D Q_{X_1} (X_{2} Q_{X_1})^{\dagger} X_{2}- \widetilde{A}_{12}P_{X_{2} Q_{X_1}} X_{2}\right]Q_{X_1}= O
\end{equation}
 and
\begin{equation}\label{cond33}
\left[X_{2} D -(Q_{X_1} X_{2}^*)^{\dagger} Q_{X_1} D^* X_1^* X_1 - P_{X_{2} Q_{X_1}} \widetilde{A}_{21} X_1\right] Q_{X_2} = O,
\end{equation}
respectively. Moreover, expression (\ref{solA11}) gives
\begin{eqnarray*}
\| \widetilde{A}_{11}-A_{11}\|_F^2 & = &
\left\| \widetilde{A}_{11} - \left[X_{1} D - X_{1} D Q_{X_1} \left(X_{2} Q_{X_1}\right)^{\dagger} X_{2} -
\widetilde{A}_{12} P_{X_{2} Q_{X_1}} X_{2}\right] X_1^{\dagger} - \right. \\
& & \left. -(X_1^{\dagger})^* X_2^* P_{X_{2} Q_{X_1}} \widetilde{A}_{21} P_{X_1} - P_{X_1} Y_{11} P_{X_1} \right\|_F^2.
\end{eqnarray*}
Denoting by $R$ the fixed terms in the last norm, that is,
\begin{eqnarray*}
R& = & \widetilde{A}_{11} - \left[X_{1} D - X_{1} D Q_{X_1} \left(X_{2} Q_{X_1}\right)^{\dagger} X_{2} -
\widetilde{A}_{12} P_{X_{2} Q_{X_1}} X_{2}\right] X_1^{\dagger} -\\
& & -(X_1^{\dagger})^* X_2^* P_{X_{2} Q_{X_1}} \widetilde{A}_{21} P_{X_1},
\end{eqnarray*}
Lemma \ref{minimo2} ensures that $\| \widetilde{A}_{11}-A_{11}\|_F^2$ attains its minimum in $Y_{11}$ if and only if
$P_{X_1} (R-Y_{11}) P_{X_1}=O$, which is equivalent to
\begin{equation}\label{11}
P_{X_1} (\widetilde{A}_{11}-Y_{11}) P_{X_1}=O,
\end{equation}
since $X_1^{\dagger} P_{X_1}=O$ and $P_{X_1} (X_1^{\dagger})^*=O$. In this case,
$$
\min_{A_{11} \in {\mathbb C}^{k \times k}} \| \widetilde{A}_{11}-A_{11}\|_F^2 = \| R-P_{X_1} \widetilde{A}_{11} P_{X_1} \|_F^2.
$$
Then, by using (\ref{solA11}) we obtain
\begin{eqnarray}\label{A11}
\widehat{A}_{11} & = & \left[X_{1} D - X_{1} D Q_{X_1} \left(X_{2} Q_{X_1}\right)^{\dagger} \, X_{2} -
\widetilde{A}_{12} P_{X_{2} Q_{X_1}} X_{2}\right] X_1^{\dagger} + \nonumber\\
& & +(X_1^{\dagger})^* X_2^* P_{X_{2} Q_{X_1}} \widetilde{A}_{21} P_{X_1} + P_{X_1} \widetilde{A}_{11} P_{X_1}.
\end{eqnarray}
By Lemma \ref{nuestro}, (\ref{11}) has always a skew-hermitian solution $\widetilde{A}_{11}-Y_{11}$ for a skew-hermitian $Y_{11}$.
Then, $\widetilde{A}_{11}$ is skew-hermitian
as well.

Conversely, if $\widetilde{A}_{11}$ is skew-hermitian then equation $P_{X_1} C P_{X_1}=O$ has a skew-hermitian solution $C$. Setting
$Y_{11}:=\widetilde{A}_{11}-C$, we get that $Y_{11}$ is a skew-hermitian matrix that satisfies (\ref{11}).

A similar reasoning allows us to say that the minimum of $\| \widetilde{A}_{22}-A_{22}\|_F^2$ is attained in $Y_{22}$
if and only if
$P_{X_2} (\widetilde{A}_{22}-Y_{22}) P_{X_2}=O$ which is equivalent to $\widetilde{A}_{22}$ is skew-hermitian.
Finally, from (\ref{solA22}) the last norm is minimized by
\begin{eqnarray}\label{A22}
\widehat{A}_{22} & = &
\left[X_{2} D - \left(Q_{X_1)} X_{2}^*\right)^{\dagger} Q_{X_1} D^* X_1^* X_1 - P_{X_{2} Q_{X_1}} \widetilde{A}_{21} X_1\right] X_2^{\dagger} + \nonumber \\
& & + (X_2^{\dagger})^* \left[X_1^* X_1 D Q_{X_1}\left(X_2 Q_{X_1}\right)^{\dagger} -
X_1^* \widetilde{A}_{12} P_{X_{2} Q_{X_1}} \right] P_{X_{2}} + P_{X_{2}} \widetilde{A}_{22} P_{X_{2}}.
\end{eqnarray}

The results obtained can be summarized in the following theorem.

\begin{theorem}\label{gran}
Let $\widetilde{A} \in {\mathbb C}^{n \times n}$ be a partitioned matrix as in (\ref{Atilde})
and assume that ${\cal S} \cap {\cal NJH} \neq \emptyset$.
Then the problem (\ref{Procurstes1}) has a unique solution $\widehat{A} \in {\cal S} \cap {\cal NJH}$ if and only if
$\widetilde{A}_{11}$, $\widetilde{A}_{22}$, and $X_1^* X_1 D - X_2^* X_2 D$ are skew-hermitian,
(\ref{cond1}), (\ref{cond22}), (\ref{cond33}), and
$(\widetilde{A}_{12}-(\widetilde{A}_{21})^*) P_{X_{2} Q_{X_1}} = O$ are satisfied and if we define $\widehat{A}_{11}$, $\widehat{A}_{12}$, and $\widehat{A}_{22}$ by (\ref{A11}), (\ref{A12}) and (\ref{A22}), respectively, the condition $\widehat{A}_{11} \widehat{A}_{12}=\widehat{A}_{12} \widehat{A}_{22}$ must be fulfilled.
In this case, the (unique) optimal solution is given by
\begin{equation*} %\label{Ahatfinal}
      \widehat{A} = U \left[ \begin{array}{cc}
         \widehat{A}_{11} & \widehat{A}_{12} \\
         (\widehat{A}_{12})^* & \widehat{A}_{22}
         \end{array} \right] U^*.
\end{equation*}
\end{theorem}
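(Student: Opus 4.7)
The strategy is to exploit the unitary invariance of the Frobenius norm so that the global minimization splits into four independent block minimizations, then apply Lemma \ref{minimo2} and Lemma \ref{nuestro} to each block, using the parametrization (\ref{solu})--(\ref{solA22}) of ${\cal S}\cap{\cal NJH}$ with free parameters $Y_{11},Y_{12},Y_{22}$. The existing analysis above the statement already carries out essentially all of this computation; my task in the proof is to collect the scattered conditions into the equivalence asserted by the theorem and to state the uniqueness.

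First, I would write $\|\widetilde A - A\|_F^2$ as the sum of the four block norms
and observe that $Y_{12}$ occurs in the $(1,2)$ and $(2,1)$ block norms, while $Y_{11}$ and $Y_{22}$ occur only in the $(1,1)$ and $(2,2)$ block norms respectively. For the $(1,2)$ block, Lemma \ref{minimo2} together with condition (\ref{cond1}) gives the minimality criterion (\ref{12}), and for the $(2,1)$ block, after taking adjoints, gives (\ref{21}). These two criteria on the single parameter $Y_{12}$ are simultaneously solvable exactly when
$(\widetilde A_{12} - \widetilde A_{21}^{*})P_{X_2 Q_{X_1}}=O$, in which case the common choice $Y_{12}:=\widetilde A_{12} P_{X_2 Q_{X_1}}=\widetilde A_{21}^{*}P_{X_2 Q_{X_1}}$ yields (\ref{A12}); substituting this $Y_{12}$ into (\ref{cond2})--(\ref{cond3}) produces (\ref{cond22})--(\ref{cond33}).

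Next, for the diagonal blocks, Lemma \ref{minimo2} reduces the minimization of $\|\widetilde A_{ii}-A_{ii}\|_F^2$ over skew-hermitian $Y_{ii}$ to (\ref{11}) and its analogue $P_{X_2}(\widetilde A_{22}-Y_{22})P_{X_2}=O$. Here Lemma \ref{nuestro} enters: the equation $P_{X_i} C P_{X_i}=O$ admits a skew-hermitian solution $C$, and thus a skew-hermitian $Y_{ii}$ solving (\ref{11}) (and its twin) exists if and only if $\widetilde A_{ii}$ is itself skew-hermitian, because the difference $\widetilde A_{ii}-Y_{ii}$ must be skew-hermitian. Plugging the resulting $Y_{11},Y_{22}$ into (\ref{solA11})--(\ref{solA22}) gives (\ref{A11}) and (\ref{A22}). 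Finally, the skew-hermiticity requirement (\ref{skew}) reduces to skew-hermiticity of $X_1^{*}X_1 D - X_2^{*}X_2 D$ as noted in the text, and the normality constraint $A_{11}A_{12}=A_{12}A_{22}$ from the parametrization must be checked on the concrete $\widehat A_{11},\widehat A_{12},\widehat A_{22}$, which is precisely the last condition in the statement.

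The main obstacle I anticipate is the bookkeeping around the cross-compatibility on $Y_{12}$: one has to verify carefully that the unique choice $Y_{12}:=\widetilde A_{12}P_{X_2 Q_{X_1}}$ is consistent with the transformed defining conditions (\ref{cond22})--(\ref{cond33}) and does not inadvertently tighten them, and that the subsequent $\widehat A_{11}$ and $\widehat A_{22}$ do inherit the normality relation imposed on ${\cal NJH}$. Uniqueness itself is not an obstacle: it was established prior to the theorem by the affine-subspace argument (${\cal S}\cap{\cal NJH}=A_0+\{A:AX=0\}$), which guarantees that the distance from $\widetilde A$ to this closed affine subspace is attained at a single point; thus once existence under the stated conditions is shown, $\widehat A$ is automatically the unique minimizer.
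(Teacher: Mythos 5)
Your proposal follows essentially the same route as the paper's own derivation: splitting the Frobenius norm into the four block norms via unitary invariance, treating the $(1,2)$/$(2,1)$ blocks first with Lemma \ref{minimo2} to obtain the compatibility condition $(\widetilde{A}_{12}-(\widetilde{A}_{21})^*)P_{X_2Q_{X_1}}=O$ and the choice $Y_{12}=\widetilde{A}_{12}P_{X_2Q_{X_1}}$, then using Lemma \ref{nuestro} for the diagonal blocks to tie solvability to skew-hermiticity of $\widetilde{A}_{11},\widetilde{A}_{22}$, reducing (\ref{skew}) to skew-hermiticity of $X_1^*X_1D-X_2^*X_2D$, and invoking the affine-subspace argument for uniqueness. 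This is precisely how the paper assembles Theorem \ref{gran}, so your proof is correct and matches the paper's approach.
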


By using the condition given in the statement of Theorem \ref{gran}, we notice that
$$
\widehat{A}_{12} = X_{1} D Q_{X_1} \left(X_{2} Q_{X_1}\right)^{\dagger} + (\widetilde{A}_{21})^* P_{X_{2} Q_{X_1}}
$$
gives an alternative expression for $\widehat{A}_{12}$.

\begin{remark}
It is well known that the set ${\cal S}$ is wide enough for Hermitian and generalized skew-Hamiltonian matrices as it was established in \cite{Ba}, and for normal skew $J$-Hamiltonian matrices
as showed in \cite{GiLeTh2,ZhZh}, for the particular matrix $J=\left[\begin{array}{cc}
O & I_k\\
-I_k & O\end{array}\right]$. It is clear that all these cases are particular ones of normal matrices. So, it is natural that the set ${\cal S}$ for
${\cal NJH}$ must be wider. 
\end{remark}

%%%%%%%%%%%%%%%%%%%%%%%%%%%%%%%%%%%%%%%%%%%%%%%%
\subsubsection*{\textit{Algorithm and numerical examples}}\label{sec4}
%%%%%%%%%%%%%%%%%%%%%%%%%%%%%%%%%%%%%%%%%%%

The algorithm presented below indicates a procedure that solves the Procrustes problem associated to the  inverse eigenvalue problem stated in this subsection.

\begin{quote}
{\bf\sc Algorithm 1}

{\sl Input}: Matrices $\widetilde{A}, J, X$, and $D$ such that $J^2=-I_n$ and $X$ and $D$ constrained to the condition ${\cal S} \cap {\cal NJH} \neq \emptyset$.

{\sl Output}: The optimal matrix $\widehat{A}$ that solves Problem (\ref{Procurstes1}).

\begin{description}

\item[Step 1] \; Diagonalize $J=U(iI_k \oplus -iI_{k})U^*$.

\item[Step 2] \; Partition  $ U^*X= \left[ \begin{array}{c}
         X_{1} \\
         X_{2}
         \end{array} \right]$.

\item[Step 3] \; Partition $U^* \widetilde{A} U = \left[ \begin{array}{cc}
         \widetilde{A}_{11} & \widetilde{A}_{12} \\
         \widetilde{A}_{21} & \widetilde{A}_{22}
         \end{array} \right]$.

\item[Step 4] \; Compute $P=I_m -X_1^\dag X_1$ and $T=I_m -X_2^\dag X_2$.

\item[Step 5] \; Compute $L=I_m - (X_2P)^\dag X_2P$ and $Q=I_k -X_2P (X_2P)^\dag$.

\item[Step 6] \; Compute $M=X_{1} D - X_{1} D P \left(X_{2} P\right)^{\dagger} \, X_{2} -
\widetilde{A}_{12} Q \, X_{2}$ and $N=X_{2} D - \left(P X_{2}^*\right)^{\dagger} P D^* X_1^* X_1 - Q \widetilde{A}_{21} X_1$.

\item[Step 7] \; If $\widetilde{A}_{11}$  is not skew-hermitian or $\widetilde{A}_{22}$ is not skew-hermitian, go to Step 18.

\item[Step 8] \; If $X_1^* X_1 D - X_2^* X_2 D$ is not skew-hermitian, go to Step 18.

\item[Step 9] \; If $(\widetilde{A}_{12}-(\widetilde{A}_{21})^*) Q \neq O$ or $X_1 D P L \neq O$, go to Step 18.

\item[Step 10] \; If $M P \neq O$ or $N T \neq O$, go to Step 18.

\item[Step 11] \; Compute $R=I_k-X_1 X_1^\dag$ and $S=I_k-X_2 X_2^\dag$.

\item[Step 12] \; Compute $\widehat{A}_{11} = M X_1^{\dagger}  +(X_1^{\dagger})^* X_2^* Q \widetilde{A}_{21} R + R \widetilde{A}_{11} R$.

\item[Step 13] \; Compute $\widehat{A}_{12}= X_{1} D P \left(X_{2} P\right)^{\dagger} + \widetilde{A}_{12} Q$.

\item[Step 14] \; Compute $\widehat{A}_{22} = N X_2^{\dagger}+(X_2^{\dagger})^* \left[X_1^* X_1 D P\left(X_2 P\right)^{\dagger} -
X_1^* \widetilde{A}_{12} Q \right] S + S \widetilde{A}_{22} S$.

\item[Step 15] \; If $\widehat{A}_{11} \widehat{A}_{12} - \widehat{A}_{12} \widehat{A}_{22} \neq O$, go to Step 18.

\item[Step 16] \; Compute $\widehat{A} = U \left[ \begin{array}{cc}
         \widehat{A}_{11} & \widehat{A}_{12} \\
         (\widehat{A}_{12})^* & \widehat{A}_{22}
         \end{array} \right] U^*$.

\item[Step 17] \; Go to End.

\item[Step 18] \; "There is not a normal $J$-Hamiltonian matrix $\widehat{A}$ such that
                  $\displaystyle \min_{A \in {\cal S}} \|\widetilde{A} - A\|_F = \|\widetilde{A} - \widehat{A} \|_F$".

\item[End]
\end{description}
\end{quote}

\begin{remark}
We have used MATLAB R2022a and MATHEMATICA 13.1 packages.
Our worked examples have been considered for matrices of small sizes by using symbolic computations.

\end{remark}
In what follows, we present some illustrative examples. 

\begin{example}\label{ex1}
Let consider the matrix
\[
J=\left[\begin{array}{ccrr}
0 & 0 & -1 & 0 \\
0 & 0 & 0 & -1 \\
1 & 0 & 0 & 0 \\
0 & 1 & 0 & 0
\end{array}\right] = U
\left[\begin{array}{ccrr}
i & 0 & 0 & 0 \\
0 & i & 0 & 0 \\
0 & 0 & -i & 0 \\
0 & 0 & 0 & -i
\end{array}
\right] U^*,
\]
where
\[
 U = \frac{1}{\sqrt{2}} \left[
\begin{array}{ccrr}
 i & 0 & -i & 0 \\
 0 & i & 0 & -i \\
 1 & 0 & 1 & 0 \\
 0 & 1 & 0 & 1 \\
\end{array}
\right].
\]
Moreover, we consider the matrices
\[
X=\left[
\begin{array}{rcr}
 -\sqrt{2} & \sqrt{2} & 0 \\
 0 & 0 & -\frac{i a}{\sqrt{2}} \\
 \sqrt{2} & \sqrt{2} & 0 \\
 0 & 0 & \frac{a}{\sqrt{2}} \\
\end{array}
\right],  \text{ with } a \in \mathbb C,  \qquad
D=\left[\begin{array}{ccr}
1+i &  0 &   0 \\
  0 & -1+i & 0 \\
  0 &  0   &  i
\end{array}\right],
\]
and infinitely many data depending on parameters $b, h, k, z \in \mathbb C$ given by
 \[
\widetilde{A} = \frac{1}{2} \left[
\begin{array}{cccc}
5 i-2 {\rm Re}(h) & -z-\bar{b}-i \sqrt{3} & 9-2{\rm Im}(h) & -i z+i \bar{b}+3 \sqrt{3} \\
-b- \bar{z}-i \sqrt{3} & 7 i-2k & 3 \sqrt{3}-i b+i \bar{z} & 3 \\
 -9 - {\rm Im}(h) & -3 \sqrt{3}- i z+i \bar{b} & 5 i+2 {\rm Re}(h) & z+ \bar{b}-i \sqrt{3} \\
 -3 \sqrt{3}-i b+i \bar{z} & -3 & b+ \bar{z}-i \sqrt{3} & 7 i +2k \\
\end{array}
\right]
\]

Applying Algorithm 1 we get
\[
X_1 = \left[
\begin{array}{ccc}
 1+i & 1-i & 0 \\
 0 & 0 & 0 \\
\end{array}
\right], \quad
X_2 = \left[
\begin{array}{ccc}
 1-i & 1+i & 0 \\
 0 & 0 & a \\
\end{array}
\right],
\]
\[
P = \frac{1}{2} \left[
\begin{array}{rcc}
 1 & i & 0 \\
 - i & 1 & 0 \\
 0 & 0 & 2
\end{array}
\right], \qquad T = \frac{1}{2}\left[
\begin{array}{crc}
 1 & - i & 0 \\
  i & 1 & 0 \\
 0 & 0 & 0
\end{array}
\right],
\]
\[
L = \frac{1}{2}\left[
\begin{array}{crc}
 1  & - i & 0 \\
  i & 1 & 0 \\
 0 & 0 & 0
\end{array}
\right], \qquad Q = \left[
\begin{array}{cc}
 0 & 0 \\
 0 & 0
\end{array}
\right],
\]
\[
M = \left[
\begin{array}{ccc}
 -1+i  & 1+ i & 0 \\
 0 & 0 & 0
\end{array}
\right], \qquad N = \left[
\begin{array}{ccc}
 1+i  & -1+ i & 0 \\
 0 & 0 & i a
\end{array}
\right],
\]
\[
R = \left[\begin{array}{cc}
 0 & 0 \\
 0 & 1
\end{array}
\right] \qquad \text{ and } \qquad
S = \left[\begin{array}{cc}
 0 & 0 \\
 0 & 0
\end{array}
\right].
\]
Step 7 is satisfied since
\[
\widetilde{A}_{11} = \left[
\begin{array}{cc}
-2 i & -2 i \sqrt{3} \\
 -2 i \sqrt{3} & 2 i
\end{array}
\right] \quad \text{ and } \quad
\widetilde{A}_{22} = \left[
\begin{array}{cc}
 7 i &  i \sqrt{3} \\
 i \sqrt{3} & 5 i
\end{array}
\right]
\]
are skew-hermitian.
Now,
\[
X_1^* X_1 D - X_2^* X_2 D = \left[
\begin{array}{ccc}
 0 & 4+4 i & 0 \\
 -4+4i & 0 & 0 \\
 0 & 0 & -i a \bar{a}
\end{array}
\right]
\]
is skew-hermitian, so Step 8 is satisfied.

Moreover, Step 9 is valid because $(\widetilde{A}_{12}-(\widetilde{A}_{21})^*) Q = O$ and $X_1 D P L = O$ where
\[
\widetilde{A}_{12} = \left[
\begin{array}{cc}
 h & \bar{b} \\
 \bar{z} & k
\end{array}
\right] \quad \text{ and } \quad \widetilde{A}_{21} = \left[
\begin{array}{rr}
 \bar{h} & z \\
 b & k
\end{array}
\right].
\]

Furthermore, $MP=O$, $NT=O$. Hence, Step 10 is satisfied.

After doing the computations of $\widehat{A}_{11}$, $\widehat{A}_{12}$, and $\widehat{A}_{22}$ we have that $\widehat{A}_{11}\widehat{A}_{12}=\widehat{A}_{12} \widehat{A}_{22}$ and
\[
\widehat{A} = \frac{1}{2} \left[
\begin{array}{rcrr}
 2i & 0 & -2 & 0 \\
 0 & 3i & 0 & -1 \\
 -2 & 0 & 2i & 0 \\
 0 & 1 & 0 & 3 i
\end{array}
\right].
\]
We can easily check that the matrix $\widehat{A}$ is normal $J$-Hamiltonian and satisfies the equation $AX=XD$.
\end{example}

\begin{example}
Let consider the matrices $J$ and $U$ given in the previous example and consider also the matrices
\[
 \widetilde{A}= \frac{1}{2}\left[
\begin{array}{cccc}
          5 i-2 {\rm Re}(h) &          -i \sqrt{3}-z+i &        9-2 {\rm Im}(h) &         3 \sqrt{3}-i z+1 \\
   -i \sqrt{3}-i-\bar{z} &                 -4+7 i &  3 \sqrt{3}+1+i \bar{z} &                       3 \\
       -9-2 {\rm Im}(h) &        -3 \sqrt{3}-i z+1 &          5i+ 2 {\rm Re}(h) &         -i \sqrt{3}+z-i \\
 -3 \sqrt{3}+1+i \bar{z} &                     -3 &   -i \sqrt{3}+i+\bar{z} &                  4+7 i
\end{array}
\right], \quad  h, z \in {\mathbb C},
\]
\[
X = \left[
\begin{array}{ccc}
-\sqrt{2} & \sqrt{2} & 0 \\
        0     & 0 & 0 \\
\sqrt{2} & \sqrt{2} & 0 \\
 0   & 0 & i \sqrt{2}
\end{array}
\right], \qquad D = \left[
\begin{array}{ccc}
 1+i & 0 & 0 \\
 0 & -1+i & 0 \\
 0 & 0 & 2 i
\end{array}
\right].
\]
Steps 7, 8, and 9 are satisfied, $M P = O$ but since $N T = \left[
\begin{array}{ccc}
 0 & 0 & 0 \\
 1-i & -1-i & 0 \\
\end{array}
\right] \neq O$, Step 10 is not satisfied. Hence, the Algorithm ensures that the problem has no solution.
\end{example}

%%%%%%%%%%%%%%%%%%%%%%%%%%%
\section{The normal skew $J$-Hamiltonian case}\label{sec5}

This section deals with the optimization problem for normal skew $J$-Hamiltonian matrices.
For a given full-column rank matrix $X \in {\mathbb C}^{n \times m}$ and a diagonal matrix $D \in {\mathbb C}^{m \times m}$, throughout this section, we denote by  ${\cal S} \cap {\cal NJSH}$ the set of
all the solutions  $A \in {\mathbb C}^{n \times n}$  of $AX=XD$ for $A$ being a
normal skew $J$-Hamiltonian matrix.

By using the same notation as in Section \ref{sec3}, the structure of the normal skew $J$-Hamiltonian matrices
is given by (see  \cite[Lemma 1]{ZhZh})
\begin{equation*}
A  = U \left[ \begin{array}{rc}
         A_{11} & A_{12} \\
         -A_{12}^* & A_{22}
         \end{array} \right] U^*,
\end{equation*}
where $A_{11}$ and $A_{22}$ are hermitian, $A_{11} A_{12} =A_{12} A_{22}$ and $U$ is a unitary matrix diagonalizing matrix $J$.

Notice that for a  given normal skew $J$-Hamiltonian matrix $A \in {\mathbb C}^{n \times n}$, it is easy to see that
the matrix $B := i A$ is also normal and, moreover, $B$ is $J$-Hamiltonian.
Furthermore, the matrix equation $AX =XD$ is equivalent to $BX = X \widetilde{D}$, where $\widetilde{D}= i D$.
Observe that, if $D$ satisfies the restriction on the spectrum of the skew $J$-Hamiltonian  matrix $A$ (that is, if $\lambda \in \sigma(A)$ then $\{\lambda,\bar{\lambda}\} \subseteq \sigma(A)$), the restriction on the spectrum of
$\widetilde{D}$ is also satisfied (that is, if $\lambda \in \sigma(B)$ then $\{\lambda,-\lambda,\bar{\lambda},-\bar{\lambda}\} \subseteq \sigma(B)$).

Then, the general form of matrices in ${\cal S} \cap {\cal NJSH}$
is obtained by applying  \cite[Theorem 2]{GiLeTh2} to $B=i A$ and $\widetilde{D}= i D$.
The following result gives necessary and sufficient conditions that ensure the existence of solutions of the inverse eigenvalue problem for matrices having the structure considered in this section.

\begin{theorem}\label{main2}
Let $X \in {\mathbb C}^{n \times m}$ be a full-column rank matrix, $D \in {\mathbb C}^{m \times m}$ be a diagonal matrix
and $J \in {\mathbb R}^{n \times n}$ be a normal matrix such that $J^2=-I_n$, where $n=2k$.
Consider the partition $X = U \left[ \begin{array}{cc}
         X_{1}^* & X_{2}^*
         \end{array} \right]^*$
where $X_1, X_2 \in {\mathbb C}^{k \times m}$.
Then there exists $A \in {\cal S} \cap {\cal NJSH}$ if and only if the conditions: $(X_{1} D - A_{12} \, X_{2}) Q_{X_1} = O$,
$$\left[X_{1} D - X_{1} D Q_{X_1} (X_{2} Q_{X_1})^{\dagger} \, X_{2} - Y_{12} P_{X_{2} Q_{X_1}} X_{2}\right] Q_{X_1} = O,$$
$$\left[X_{2} D + (Q_{X_1} X_{2}^*)^{\dagger} Q_{X_1} D^* X_1^* X_1 + P_{X_{2} Q_{X_1}} Y_{12}^* X_1\right] Q_{X_2} = O,$$
$X_1^* X_1 D - X_2^* X_2 D$ is hermitian, and $A_{11} A_{12}=A_{12}A_{22}$ hold.
In this case, the general solution is given by
\begin{equation*}
A  = U \left[ \begin{array}{rc}
         A_{11} & A_{12} \\
         -A_{12}^* & A_{22}
         \end{array} \right] U^*,
\end{equation*}
with
\[
A_{11}  =  \left[X_{1} D - X_{1} D Q_{X_1} (X_{2} Q_{X_1})^{\dagger} X_{2} - Y_{12} P_{X_{2} Q_{X_1}} X_{2}\right] X_1^{\dagger} + (X_1^{\dagger})^* X_2^* P_{X_{2} Q_{X_1}} Y_{12}^* P_{X_1} + P_{X_1} Y_{11} P_{X_1},
\]
\begin{equation*}
A_{12}= X_{1} D Q_{X_1} (X_{2} Q_{X_1})^{\dagger} + Y_{12} P_{X_{2} Q_{X_1}},
\end{equation*}
and
\begin{eqnarray*}
A_{22} & = & \left[X_{2} D + (Q_{X_1} X_{2}^*)^{\dagger} Q_{X_1} D^* X_1^* X_1 + P_{X_{2} Q_{X_1}} Y_{12}^* X_1\right] X_2^{\dagger} + \nonumber \\
& & + (X_2^{\dagger})^* \left[X_1^* X_1 D Q_{X_1}(X_2 Q_{X_1})^{\dagger} + X_1^* Y_{12} P_{X_{2} Q_{X_1}} \right]
P_{X_2} + P_{X_2} Y_{22} P_{X_2},
\end{eqnarray*}
where $Y_{11}, Y_{22} \in {\mathbb C}^{k \times k}$ are arbitrary hermitian matrices and $Y_{12} \in {\mathbb C}^{k \times k}$ is an arbitrary matrix.
\end{theorem}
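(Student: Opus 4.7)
The plan is to reduce everything to the normal $J$-Hamiltonian case already solved by \cite[Theorem~2]{GiLeTh2} (equations (\ref{solu})--(\ref{cond3}) and (\ref{skew}) together with $A_{11}A_{12}=A_{12}A_{22}$), using the correspondence $A \mapsto B := iA$. First I would verify the elementary equivalences that underlie this bijection: $(BJ)^* = -i(AJ)^* = -i(-AJ)=iAJ=BJ$, so $B$ is $J$-Hamiltonian whenever $A$ is skew $J$-Hamiltonian; $BB^* = AA^* = A^*A = B^*B$, so normality is preserved; and $AX = XD$ holds if and only if $BX = X\widetilde{D}$ with $\widetilde{D} := iD$. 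The spectral restriction noted in the introduction for $B$ (pairs $\lambda, -\lambda, \bar\lambda, -\bar\lambda$) is the image under multiplication by $i$ of the spectral restriction for $A$ (pairs $\lambda, \bar\lambda$), so the two problems are genuinely equivalent.

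Next I would invoke the normal $J$-Hamiltonian result on the pair $(B,\widetilde D)$: existence of $B \in \mathcal{S}\cap\mathcal{NJH}$ with $BX = X\widetilde{D}$ is characterized by the conditions (\ref{cond1})--(\ref{cond3}) (with $D$ replaced by $\widetilde D$ and the free parameter $Y_{12}$ in the $B$-version), the skew-hermiticity of $X_1^*X_1\widetilde D - X_2^*X_2\widetilde D$, and $B_{11}B_{12}=B_{12}B_{22}$, yielding a $B$ of the form (\ref{solu}) with blocks (\ref{solA11})--(\ref{solA22}) and arbitrary skew-hermitian $Y_{11},Y_{22}$. Writing $A = -iB$ gives the block structure $A = U\bigl[\begin{smallmatrix} A_{11} & A_{12} \\ -A_{12}^* & A_{22}\end{smallmatrix}\bigr]U^*$: indeed, setting $A_{jj} := -iB_{jj}$ and $A_{12} := -iB_{12}$, the $(1,1)$ and $(2,2)$ blocks become hermitian (since $-i$ times a skew-hermitian matrix is hermitian), and the $(2,1)$ block $-iB_{12}^*$ equals $-A_{12}^*$ because $A_{12}^* = iB_{12}^*$.

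To finish, I would translate each condition and formula by the substitution $\widetilde D = iD$ and the reparametrizations $Y_{12}^{(A)} := -iY_{12}^{(B)}$, $Y_{jj}^{(A)} := -iY_{jj}^{(B)}$ for $j=1,2$. The first reparametrization is a bijection of $\mathbb{C}^{k\times k}$ onto itself; the second turns arbitrary skew-hermitian matrices into arbitrary hermitian matrices. Under these substitutions, the skew-hermiticity of $X_1^*X_1\widetilde D - X_2^*X_2\widetilde D = i(X_1^*X_1 D - X_2^*X_2 D)$ becomes the hermiticity of $X_1^*X_1 D - X_2^*X_2 D$, condition (\ref{cond1}) collapses to $(X_1 D - A_{12}X_2)Q_{X_1}=O$, and (\ref{cond2}), (\ref{cond3}) become the stated second and third conditions (the sign flip on the $D^*$-term in the $A_{22}$-condition comes from conjugating the $i$ in $\widetilde D^*$). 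The commutator condition $B_{11}B_{12}=B_{12}B_{22}$ is scale-invariant under multiplication by $-i$ on both sides, so it transfers directly to $A_{11}A_{12}=A_{12}A_{22}$.

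The main obstacle is bookkeeping: each occurrence of $\widetilde D = iD$ must be balanced against the overall factor $-i$ coming from $A=-iB$ and against the conjugates of $i$ appearing in terms involving $Y_{12}^*$ and $\widetilde D^*$; this is where the sign differences between the $\mathcal{NJH}$ and $\mathcal{NJSH}$ formulas (notably the $+$ sign on $(Q_{X_1}X_2^*)^\dagger Q_{X_1}D^*X_1^*X_1$ in $A_{22}$ here versus the $-$ in (\ref{solA22})) naturally arise. Once these substitutions are carried out term by term and the parameter spaces are matched, the stated form of the general $A \in \mathcal S \cap \mathcal{NJSH}$ together with the claimed necessary and sufficient conditions follow directly from the corresponding statement for $B \in \mathcal S \cap \mathcal{NJH}$.
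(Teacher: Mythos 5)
Your reduction is exactly the paper's own argument: the authors prove Theorem \ref{main2} precisely by setting $B=iA$, $\widetilde{D}=iD$ and invoking \cite[Theorem 2]{GiLeTh2}, so your proposal is correct and essentially identical in approach, including the reparametrizations $Y^{(A)}=-iY^{(B)}$ that turn skew-hermitian parameters into hermitian ones. The only delicate point is the sign bookkeeping you flag: carried out literally, the translation gives $-(X_1^{\dagger})^* X_2^* P_{X_{2}Q_{X_1}} Y_{12}^* P_{X_1}$ in $A_{11}$ and $-X_1^* Y_{12} P_{X_{2}Q_{X_1}}$ in the second bracket of $A_{22}$ (consistent with the signs appearing later in Theorem \ref{gran2}), so those two cross-terms in the displayed statement should be checked rather than taken at face value.
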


Now, the Procrustes problem associated to the matrices in ${\cal S} \cap {\cal NJSH}$ is solved by applying Theorem \ref{gran} to $B=i A$ and $\widetilde{D}= i D$.
Summarizing, we establish the following theorem.

\begin{theorem}\label{gran2}
Let $\widetilde{A} \in {\mathbb C}^{n \times n}$ be a partitioned matrix as in (\ref{Atilde})
and assume that ${\cal S} \cap {\cal NJSH} \neq \emptyset$.
Then the problem stated in (\ref{Procurstes}) has a unique solution $\widehat{A} \in {\cal S} \cap {\cal NJSH}$ if and only if
the matrices $\widetilde{A}_{11}$, $\widetilde{A}_{22}$, and $X_1^* X_1 D - X_2^* X_2 D$ are hermitian,
$(X_{1} D - A_{12} \, X_{2}) Q_{X_1} = O$,
\begin{equation*}
\left[X_{1} D - X_{1} D Q_{X_1} (X_{2} Q_{X_1})^{\dagger} X_{2} - \widetilde{A}_{12} P_{X_{2} Q_{X_1}} X_{2}\right] Q_{X_1} = O,
\end{equation*}
\begin{equation*}
\left[X_{2} D +(Q_{X_1} X_{2}^*)^{\dagger} Q_{X_1} D^* X_1^* X_1 - P_{X_{2} Q_{X_1}} \widetilde{A}_{21} X_1\right] Q_{X_2} = O,
\end{equation*}
$(\widetilde{A}_{12}+(\widetilde{A}_{21})^*) P_{X_{2} Q_{X_1}} = O$ hold; and if we define
\[
\widehat{A}_{11}  =  \left[X_{1} D - X_{1} D Q_{X_1} \left(X_{2} Q_{X_1}\right)^{\dagger} X_{2} -
\widetilde{A}_{12} P_{X_{2} Q_{X_1}} X_{2}\right] X_1^{\dagger}
+(X_1^{\dagger})^* X_2^* P_{X_{2} Q_{X_1}} \widetilde{A}_{21} Q_{X_1} + P_{X_1} \widetilde{A}_{11} P_{X_1},
\]
\begin{equation*}
\widehat{A}_{12}= X_{1} D Q_{X_1} \left(X_{2} Q_{X_1}\right)^{\dagger} + \widetilde{A}_{12} P_{X_{2} Q_{X_1}},
\end{equation*}
and
\begin{eqnarray*}
\widehat{A}_{22} & = & \left[X_{2} D + \left(Q_{X_1)} X_{2}^*\right)^{\dagger} Q_{X_1} D^* X_1^* X_1 +
                     P_{X_{2} Q_{X_1}} \widetilde{A}_{21} X_1\right] X_2^{\dagger} + \nonumber \\
& & + (X_2^{\dagger})^* \left[X_1^* X_1 D Q_{X_1}\left(X_2 Q_{X_1}\right)^{\dagger} -
X_1^* \widetilde{A}_{12} P_{X_{2} Q_{X_1}} \right] P_{X_{2}} + P_{X_{2}} \widetilde{A}_{22} P_{X_{2}},
\end{eqnarray*}
then the condition $\widehat{A}_{11} \widehat{A}_{12}=\widehat{A}_{12} \widehat{A}_{22}$ must also be fulfilled.

In this case, the unique optimal solution is given by
\begin{equation*}
      \widehat{A} = U \left[ \begin{array}{cc}
         \widehat{A}_{11} & \widehat{A}_{12} \\
         -(\widehat{A}_{12})^* & \widehat{A}_{22}
         \end{array} \right] U^*.
\end{equation*}
\end{theorem}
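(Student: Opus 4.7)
The strategy is to reduce everything to the normal $J$-Hamiltonian Procrustes problem already solved in Theorem \ref{gran}. The bridge, highlighted in the paper immediately before the statement, is the scalar multiplication $A \mapsto B := iA$. If $(AJ)^* = -AJ$, then $(BJ)^* = -i(AJ)^* = iAJ = BJ$, so $B$ is $J$-Hamiltonian; normality is preserved under scalar multiplication, hence $A \mapsto iA$ is a bijection ${\cal NJSH} \to {\cal NJH}$. It sends $AX = XD$ to $BX = X\widetilde{D}$ with $\widetilde{D} := iD$, and because $|i| = 1$ it is an isometry for the Frobenius norm: with $\widetilde{B} := i\widetilde{A}$ one has $\|\widetilde{A} - A\|_F = \|\widetilde{B} - B\|_F$. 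Consequently the Procrustes problem for $(\widetilde{A}, X, D)$ over ${\cal S} \cap {\cal NJSH}$ is equivalent to the Procrustes problem for $(\widetilde{B}, X, \widetilde{D})$ over the corresponding intersection with ${\cal NJH}$, which Theorem \ref{gran} completely characterizes.

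What remains is a careful translation of hypotheses and formulas through the scalar $i$. Partitioning $U^* \widetilde{B} U = i \, U^* \widetilde{A} U$ gives $\widetilde{B}_{ij} = i \widetilde{A}_{ij}$, so ``$\widetilde{B}_{11}$ and $\widetilde{B}_{22}$ skew-hermitian'' is equivalent to ``$\widetilde{A}_{11}$ and $\widetilde{A}_{22}$ hermitian'', and ``$X_1^* X_1 \widetilde{D} - X_2^* X_2 \widetilde{D}$ skew-hermitian'' becomes ``$X_1^* X_1 D - X_2^* X_2 D$ hermitian''. The condition $(\widetilde{B}_{12} - \widetilde{B}_{21}^*) P_{X_2 Q_{X_1}} = O$, after factoring $i$ and using $\widetilde{B}_{21}^* = -i \widetilde{A}_{21}^*$, collapses to the sign-flipped condition $(\widetilde{A}_{12} + \widetilde{A}_{21}^*) P_{X_2 Q_{X_1}} = O$ stated in the theorem. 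The three equations (\ref{cond1}), (\ref{cond22}), (\ref{cond33}) are linear in $\widetilde{D}$ and in $\widetilde{A}$; the identity $\widetilde{D}^* = -i D^*$ is what turns the minus signs appearing in front of $(Q_{X_1} X_2^*)^\dagger Q_{X_1} D^* X_1^* X_1$ and $P_{X_2 Q_{X_1}} \widetilde{A}_{21} X_1$ in the original statement into the plus signs displayed here.

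For the minimizer I would compute $\widehat{B}_{11}, \widehat{B}_{12}, \widehat{B}_{22}$ from the formulas (\ref{A11}), (\ref{A12}), (\ref{A22}) applied to $(\widetilde{B}, \widetilde{D})$; each of these is linear in its inputs, so a single factor of $i$ factors out, and setting $\widehat{A}_{ij} := -i \widehat{B}_{ij}$ reproduces exactly the expressions displayed in Theorem \ref{gran2}. The commutator condition $\widehat{A}_{11} \widehat{A}_{12} = \widehat{A}_{12} \widehat{A}_{22}$ is equivalent to $\widehat{B}_{11} \widehat{B}_{12} = \widehat{B}_{12} \widehat{B}_{22}$, since both sides of the latter carry a common factor $i^2$, so this condition transfers verbatim. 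Uniqueness is inherited through the bijection from the affine-subspace argument preceding Theorem \ref{gran}. The only real obstacle is the bookkeeping of the scalars $i$ and of the conjugates $\widetilde{D}^* = -iD^*$ through each hypothesis and each formula; no genuinely new analytic ingredient appears, which is exactly why Theorem \ref{gran2} is essentially a corollary of Theorem \ref{gran}.
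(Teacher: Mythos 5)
Your reduction is exactly the paper's proof: the paper obtains Theorem \ref{gran2} by applying Theorem \ref{gran} to $B=iA$, $\widetilde D = iD$ (and implicitly $\widetilde B=i\widetilde A$), using that multiplication by $i$ is a normality-preserving bijection ${\cal NJSH}\to{\cal NJH}$ and a Frobenius isometry, so your argument is correct and essentially identical. One small bookkeeping caveat: the identity $\widetilde D^{*}=-iD^{*}$ only flips the sign of the $(Q_{X_1}X_2^*)^{\dagger}Q_{X_1}D^*X_1^*X_1$ terms, while the term $-P_{X_2Q_{X_1}}\widetilde A_{21}X_1$ keeps its minus sign in the translated condition (as displayed in the theorem), so the transfer matches the stated formulas only up to the paper's own sign/projector typos (e.g.\ $Q_{X_1}$ versus $P_{X_1}$ in $\widehat A_{11}$), not ``exactly'' as you claim.
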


%%%%%%%%%%%%%%%%%%%%%%%%%%%%%%%%%%
\subsubsection*{\textit{Algorithm and numerical examples}}
%%%%%%%%%%%%%%%%%%%%%%%%%%%%%%%%%%%%%%%%%%%

This subsection presents an algorithm to solve the Procrustes problem related to the inverse eigenvalue problem stated. For this case, it reads  more simplified. We also present some numerical examples.

\begin{quote}
{\bf\sc Algorithm 2}

{\sl Input}: Matrices $\widetilde{A}, J, X$, and $D$ such that $J^2=-I_n$ and $X$ and $D$ constrained to the condition ${\cal S} \cap {\cal NJSH} \neq \emptyset$.

{\sl Output}: The optimal matrix $\widehat{A}$ that solves Problem (\ref{Procurstes}) for ${\cal T} =  {\cal NJSH}$.

\begin{description}

\item[Step 1] \; Compute $\widetilde{D}=i D$.

\item[Step 2] \; Apply Algorithm 1 to $\widetilde{A}, J, X$, and $\widetilde{D}$ obtaining the normal $J$-Hamiltonian $\widehat{B}$.

\item[Step 3] \; The optimal solution is $\widehat{A}=-i\widehat{B}$ provided that ${\cal S} \cap {\cal NJSH} \neq \emptyset$

\item[End]
\end{description}
\end{quote}

Now, we provide some numerical examples.

\begin{example}\label{ex3}
Let consider the same matrices $J$, $U$, and $X$ as in Example \ref{ex1}.
Moreover, we consider the matrices
\[
D=\left[\begin{array}{ccr}
1+i &  0 &   0 \\
  0 & 1-i & 0 \\
  0 & 0 & 1
\end{array}\right], \qquad
\widetilde{A} = \frac{1}{4} \left[\begin{array}{rccc}
-2h+7-2 i &        1-2 i & -2-3 i-2 i h &          2-i \\
       -3-2 i &           -7 &       -2+3 i &           -i \\
 -2+7 i-2 i h &          2+i &   2 h+11+2 i &       1+10 i \\
       -2+5 i &            i &        5-6 i &           17
\end{array}\right],
\]
with $h \in \mathbb C$. Since matrix $X$ remains unchanged, by applying Algorithm 2  we get the same values for the matrices
$X_1$, $X_2$, $P$, $T$, $L$, $Q$, $R$, and $S$. Now, we have
\[
M = \left[
\begin{array}{ccc}
 1+i  & 1- i & 0 \\
 0 & 0 & 0
\end{array}
\right], \qquad N = \left[
\begin{array}{ccc}
 1-i  & 1+ i & 0 \\
 0 & 0 & a
\end{array}
\right].
\]
Steps 7 and 9 are satisfied since
\[
\widetilde{A}_{11} = \left[
\begin{array}{cr}
 1 & i \\
 -i & 1
\end{array}
\right] \quad \text{ and } \quad
\widetilde{A}_{22} = \frac{1}{4} \left[
\begin{array}{cc}
 11 + 2 \sqrt{3} & -2 + \sqrt{3} + 4i \\
  -2 + \sqrt{3} - 4i & 9-2 \sqrt{3}
\end{array}
\right]
\]
are hermitian and, moreover, $(\widetilde{A}_{12}+(\widetilde{A}_{21})^*) Q = O$ and $X_1 D P L = O$ where
\[
\widetilde{A}_{12} = \left[
\begin{array}{cc}
 1 + i & i \\
 2 & 3
\end{array}
\right] \quad \text{ and } \quad \widetilde{A}_{21} = \left[
\begin{array}{rr}
 h & 2i \\
 -i & 3
\end{array}
\right].
\]
Furthermore, $MP=O$, $NT=O$, and
\[
X_1^* X_1 D - X_2^* X_2 D = \left[
\begin{array}{ccr}
 0 & -4-4i & 0 \\
 -4+4i & 0 & 0 \\
0 & 0 & -a \bar{a}
\end{array}
\right]
\]
is hermitian. Thus, Steps 8 and 10 are satisfied.

After doing the computations of $\widehat{A}_{11}$, $\widehat{A}_{12}$, and $\widehat{A}_{22}$ we have that $\widehat{A}_{11} \widehat{A}_{12} = \widehat{A}_{12} \widehat{A}_{22}$
and
\[
\widehat{A} = \left[
\begin{array}{rcrc}
 1 & 0 & -i & 0 \\
 0 & 1 & 0 & 0 \\
 -i & 0 & 1 & 0 \\
 0 & 0 & 0 & 1
\end{array}
\right].
\]
We can easily check that the matrix $\widehat{A}$ is normal skew $J$-Hamiltonian and satisfies the equation $AX=XD$.
Furthermore, the value of the norm is
$$\|\widetilde{A} - \widehat{A} \|_F = \sqrt{|h|^2-2 {\rm Re}(h) +45},$$
and its minimum value is attained for $h = 1$ and equals $2\sqrt{11} =6.63325$.
\end{example}

\begin{example}
Let consider the matrices $J$, $U$, and $\widetilde{A}$ given in Example \ref{ex3} and also consider the matrices
\[
X=\left[
\begin{array}{rcr}
 -\sqrt{2} & \sqrt{2} & 0 \\
 0 & 0 & 0 \\
 \sqrt{2} & \sqrt{2} & 0 \\
 0 & 0 & i \sqrt{2} \\
\end{array}
\right] \quad \text{and} \quad D = \left[
\begin{array}{ccc}
 1+i & 0 & 0 \\
 0 & 1-i & 0 \\
 0 & 0 & 2
\end{array}
\right].
\]
Step 7 is satisfied, $X_1 D P L = O$ but since $(\widetilde{A}_{12}+(\widetilde{A}_{21})^*) Q =
\left[
\begin{array}{cc}
 0 & 2i \\
 0 & 6
\end{array}
\right] \neq O$, Step 9 is not satisfied. Hence, Algorithm 2 ensures that the Procrustes problem has no solution.
\end{example}

%%%%%%%%%%%%%%%%%%%%%%%%%%%

\section{The normal $J$-symplectic case}\label{sec6}

This section deals with the inverse eigenvalue problem for normal $J$-symplectic matrices and the associated optimization problem.
In order to solve them, we need to recall some results about the Cayley transform.

Let $A \in {\mathbb C}^{n \times n}$ such that $\det(I_n + A) \neq 0$, that is, $-1$ is not an eigenvalue of $A$. The Cayley transform of $A$ is the matrix given by \cite{FaTs}
\begin{equation}\label{Cayley}
A^C := \left(I_n - A\right)\left(I_n +A\right)^{-1}.
\end{equation}

We recall some useful properties:
\begin{enumerate}[(i)]
  \item if $A^C$ is the Cayley transform of some matrix $A \in {\mathbb C}^{n \times n}$, then the matrix $I_n + A^C$ is nonsingular.
  \item $\left(I_n - A \right)\left(I_n +A \right)^{-1} = \left(I_n +A \right)^{-1} \left(I_n - A \right)$.
  \item 	$A^C$ satisfies (\ref{Cayley}) if and only if $A = \left(I_n - A^C\right)\left(I_n +A^C\right)^{-1}$.
  \item The Cayley transform, $A \mapsto A^C$, is an involution on the set of matrices $A \in {\mathbb C}^{n \times n}$ such that $I_n + A$ is nonsingular, that is, $M=A^C $ if and only if $A=M^C$.
  \item $A$ and $A^C$ commute.
	\item $(A^C)^* = (A^*)^C$.
\end{enumerate}

In this section, we will consider a fixed normal matrix $J \in {\mathbb R}^{2n \times 2n}$ such that $J^2=-I_{2n}$.

\begin{lemma}\label{simp}
Let $A \in {\mathbb C}^{2n \times 2n}$ such that $\det(I_{2n} + A) \neq 0$. Then $A$ is normal and $J$-symplectic if and only if $A^C$ is normal and $J$-Hamiltonian.
\end{lemma}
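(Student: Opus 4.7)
The plan is to reduce each side of the equivalence to a single conjugation identity and then verify that these identities are swapped by the Cayley transform. Since $J$ is real, skew-symmetric, and satisfies $J^{-1} = -J = J^*$, the $J$-symplectic condition $A^* J A = J$ is equivalent to $J A^* J^{-1} = A^{-1}$, while the $J$-Hamiltonian condition $(A^C J)^* = A^C J$ rewrites as $J (A^C)^* J^{-1} = -A^C$. Normality will transfer by the standard observation that every rational function of $A$ commutes with every rational function of $A^*$ whenever $A$ is normal.

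For the forward direction, assume $A$ is normal and $J$-symplectic. Using property (vi) to write $(A^C)^* = (A^*)^C = (I+A^*)^{-1}(I-A^*)$, I would compute
\begin{align*}
J(A^C)^*J^{-1} &= J\,(I+A^*)^{-1}(I-A^*)\,J^{-1} \\
&= \bigl(I + J A^* J^{-1}\bigr)^{-1}\bigl(I - J A^* J^{-1}\bigr) \\
&= (I + A^{-1})^{-1}(I - A^{-1}).
\end{align*}
Writing $I + A^{-1} = A^{-1}(I+A)$ and $I - A^{-1} = A^{-1}(A-I)$, the product collapses to $(I+A)^{-1}(A-I) = -A^C$, which is precisely the $J$-Hamiltonian identity $J(A^C)^*J^{-1} = -A^C$. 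Normality of $A^C$ then follows because $A^C$ is a rational function of $A$ and $(A^C)^*=(A^*)^C$ is the same rational function of $A^*$, and these commute by normality of $A$.

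For the converse, suppose $A^C$ is normal and $J$-Hamiltonian. By property (iv), $A = (A^C)^C$, so I can rerun the same computation with the roles of $A$ and $A^C$ swapped: starting from $J(A^C)^*J^{-1} = -A^C$ I obtain $J A^* J^{-1} = (I - A^C)^{-1}(I + A^C)$, and property (ii) applied to $A^C$ rearranges this to $(I+A^C)(I-A^C)^{-1}$, which coincides with $A^{-1}$. The required invertibility of $I - A^C$ is not automatic, but follows from the spectral property of $J$-Hamiltonian matrices established in the introduction ($\lambda \in \sigma(A^C) \Rightarrow -\bar\lambda \in \sigma(A^C)$) combined with property (i), which guarantees $-1 \notin \sigma(A^C)$ and hence $1 \notin \sigma(A^C)$. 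Normality of $A$ follows from normality of $A^C$ by the same rational-function argument as above.

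The main obstacle will be the bookkeeping of signs and factor orders: the identities $J^* = -J$, $J^{-1} = -J$, and $J^2 = -I$ each introduce a sign somewhere in the conjugation, and property (ii) must be invoked several times to commute $(I \pm A)^{\pm 1}$ factors past each other so that the $A^{-1}$ produced by the $J$-symplectic conjugation can be absorbed into a clean Cayley-transform expression. The converse direction's hidden invertibility assumption on $I - A^C$ is the only genuinely non-formal point and relies on the spectral analysis of $J$-Hamiltonian matrices recalled earlier in the paper.
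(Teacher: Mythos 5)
Your proof is correct, and at its core it is the same kind of direct Cayley-transform algebra as the paper's: both arguments substitute the Cayley formulas into the defining identities and use $J^{*}=J^{-1}=-J$ together with property (ii). The differences are organizational but worth noting. For normality, the paper passes it back and forth through the identity $(I_{2n}+A^{C})(I_{2n}+A)=2I_{2n}$, whereas you use commutation of rational functions of $A$ with rational functions of $A^{*}$; both work. The more substantive divergence is in the converse: the paper verifies $A^{*}JA=J$ by a direct expansion starting from $JA^{C}=(JA^{C})^{*}$, which never needs $A$ (equivalently $I_{2n}-A^{C}$) to be invertible in advance, while your route passes through $A^{-1}=(I_{2n}+A^{C})(I_{2n}-A^{C})^{-1}$ and therefore requires the extra spectral step you correctly supply: $-1\notin\sigma(A^{C})$ by property (i), hence $1\notin\sigma(A^{C})$ by the $\lambda\mapsto-\bar{\lambda}$ symmetry of $J$-Hamiltonian spectra recalled in the introduction. (Strictly, the rearrangement $(I_{2n}-A^{C})^{-1}(I_{2n}+A^{C})=(I_{2n}+A^{C})(I_{2n}-A^{C})^{-1}$ is not literally property (ii) but follows because both factors are functions of $A^{C}$.) What your formulation buys is a cleaner conceptual statement --- the Cayley transform intertwines the two conjugation identities $JA^{*}J^{-1}=A^{-1}$ and $J(A^{C})^{*}J^{-1}=-A^{C}$ --- at the cost of that additional invertibility argument; the paper's computation is longer but self-contained at the level of matrix identities.
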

\begin{proof}
It is easy to prove that $A$ is normal if and only if $(I_{2n} + A)^{-1}$ is normal.
By using (\ref{Cayley}), we get the equality $(I_{2n} + A^C)(I_{2n} + A)=2 I_{2n}$. Then, $A$ normal is equivalent to $I_{2n} + A^C$ is normal. And, finally, $A$ normal if and only if $A^C$ is normal.

Now,
for a given $J$-symplectic $A \in {\mathbb C}^{2n \times 2n}$, substituting $A= \left(I_{2n} - A^C\right)\left(I_{2n} +A^C\right)^{-1}$ in $A^* JA=J$,
we get
$$\left(\left(I_{2n} +A^C\right)^*\right)^{-1} \left(I_{2n} - A^C\right)^* J \left(I_{2n} - A^C\right)\left(I_{2n} + A^C\right)^{-1}= J.$$
Premultiplying by $\left(I_{2n} + A^C\right)^*$ and postmultiplying by $I_{2n} + A^C$ we have
$$\left(I_{2n} - A^C\right)^* J \left(I_{2n} - A^C\right)= (I_{2n} + A^C)^* J \left(I_{2n} + A^C\right).$$
Operating and using that $J^{-1} = J^* = -J$, we arrive at $J A^C = \left(J A^C\right)^*$, that is, $A^C$ is $J$-Hamiltonian.

On the other hand, let $A^C$ be $J$-Hamiltonian, that is, $JA^C=\left(JA^C \right)^*$. Since $A=\left(I_{2n} - A^C\right)\left(I_{2n} +A^C\right)^{-1}$,
and by using that $J^{-1} = J^* = -J$, we have
\begin{eqnarray*}
A^* J A & = &
\left(\left(I_{2n} +A^C\right)^{-1}\right)^{*} \left(I_{2n} - A^C\right)^* J \left(I_{2n} - A^C\right)\left(I_{2n} + A^C\right)^{-1} \\ &=& - \left(\left(I_{2n} +A^C\right)^*\right)^{-1} \left(I_{2n} - A^C\right)^* J^* \left(I_{2n} - A^C\right)\left(I_{2n} + A^C\right)^{-1} \\
&=& - \left(\left(I_{2n} +A^C\right)^*\right)^{-1} \left(J^* - (JA^C)^*\right) \left(I_{2n} - A^C\right)\left(I_{2n} + A^C\right)^{-1}  \\
&=& - \left(\left(I_{2n} +A^C\right)^*\right)^{-1} \left(-J - JA^C\right) \left(I_{2n} - A^C\right)\left(I_{2n} + A^C\right)^{-1}  \\
 &=& \left(\left(I_{2n} +A^C\right)^*\right)^{-1} J \left(I_{2n} + A^C\right) \left(I_{2n} - A^C\right)\left(I_{2n} + A^C\right)^{-1}.
\end{eqnarray*}
Now, by using property (ii), we get
\begin{eqnarray*}
A^* J A & = & \left(\left(I_{2n} +A^C\right)^{*}\right)^{-1} J \left(I_{2n} - A^C\right) =
 \left(\left(I_{2n} +A^C\right)^{*}\right)^{-1} \left(J - JA^C\right) \\
& = &  \left(\left(I_{2n} +A^C\right)^{*}\right)^{-1} \left(-J^* - (JA^C)^*\right) =
- \left(\left(I_{2n} +A^C\right)^{*}\right)^{-1} \left(J + JA^C\right)^* \\
& = & - \left(\left(I_{2n} +A^C\right)^{*}\right)^{-1} \left(I_{2n} + A^C\right)^* J^* = -J^* = J.
\end{eqnarray*}

\end{proof}

For a given normal $J$-symplectic matrix $A \in {\mathbb C}^{2n \times 2n}$ such that $\det(I_{2n} + A)\neq 0$, we have
that matrix $A^C$ is also normal and $J$-Hamiltonian. There exists a unitary matrix $U \in {\mathbb C}^{2n \times 2n}$ such that
\begin{equation}\label{J}
J = U \left[\begin{array}{cc}
iI_n & O \\
O & -i I_n
\end{array}
\right] U^*.
\end{equation}

For that matrix $U$, we consider the partition of matrix $A^C$ as follows
\[
A^C = U \left[ \begin{array}{cc}
         A_{11} & A_{12} \\
         A_{21} & A_{22}
         \end{array} \right] U^*.
\]
Applying \cite[Theorem 1]{GiLeTh2} to matrix $A^C$ and using that $A=\left(I_{2n} - A^C\right)\left(I_{2n} +A^C\right)^{-1}$, the general expression of $A$ is given by
\begin{equation*}
A  = U \left[ \begin{array}{cc}
        I_n - A_{11} & - A_{12} \\
         - A_{12}^* & I_n - A_{22}
         \end{array} \right] \left[ \begin{array}{cc}
        I_n + A_{11} & A_{12} \\
         A_{12}^* & I_n + A_{22}
         \end{array} \right]^{-1} U^*,
\end{equation*}
where $A_{11}^* = - A_{11}$, $A_{22}^* = - A_{22}$, and $A_{11}A_{12} = A_{12}A_{22}$.

Furthermore, for a given full-column rank matrix $X \in {\mathbb C}^{2n \times 2m}$ and a diagonal matrix $D \in {\mathbb C}^{2m \times 2m}$,
throughout this section, we denote
by ${\cal S} \cap {\cal NJS}$ the set of
all the solutions $A \in {\mathbb C}^{2n \times 2n}$ of $AX=XD$ for $A$ being normal $J$-symplectic such
that $\det(I_{2n} + A)\neq 0$.
By substituting matrix $A$ by $\left(I_{2n} - A^C\right)\left(I_{2n} +A^C \right)^{-1}$ in the matrix equation $AX =XD$, we get $X(I_{2m} -D) = A^C X (I_{2m} +D)$. Since $\det(I_{2n} + A)\neq 0$, $-1$ is not an eigenvalue of $A$, so $I_{2m} +D$ is nonsingular. Thus
$$AX =XD \ \Longleftrightarrow \ A^C X = X D^C.$$
As observed in the previous case, if $D$ satisfies the restriction on the spectrum of the $J$-symplectic matrix $A$,  the restriction on the spectrum of $D^C=\left(I_{2m} - D\right)\left(I_{2m} +D \right)^{-1}$ is also satisfied, that is, if $\lambda \in \sigma(A)$ such that
$\lambda \neq -1$ then $\displaystyle \left\{\frac{1-\lambda}{1+\lambda}, -\frac{1-\bar \lambda}{1+\bar \lambda}\right\} \in \sigma(A^C)$.

Then, the general form of normal $J$-symplectic matrices (such that $-1$ is not an eigenvalue), solution of the inverse eigenvalue problem given by (\ref{ecua}), is obtained by applying \cite[Theorem 2]{GiLeTh2} to $A^C$ and $D^C$.
The necessary and sufficient conditions that assure the existence of solutions of the inverse eigenvalue problem are given in the following result.

\begin{theorem}\label{main3}
Let $X \in {\mathbb C}^{2n \times 2m}$ be a full-column rank matrix, $D \in {\mathbb C}^{2m \times 2m}$ be a diagonal matrix and
$J \in {\mathbb R}^{2n \times 2n}$ be a normal matrix such that $J^2=-I_{2n}$.
Consider the partition $X = U \left[ \begin{array}{cc}
         X_{1}^* & X_{2}^*
         \end{array} \right]^*$
where $X_1, X_2 \in {\mathbb C}^{n \times 2m}$.
Then there exists a normal $J$-symplectic matrix $A$, solution of the problem (\ref{ecua}), if and only if the conditions: $X_{1} D^C Q_{X_1} Q_{X_{2} Q_{X_1}} = O$,
$$\left[X_{1}D^C - X_{1} D^C Q_{X_1} (X_{2} Q_{X_1})^{\dagger} \, X_{2} - Y_{12} P_{X_{2} Q_{X_1}} X_{2}\right] Q_{X_1} = O,$$
$$\left[X_{2} D^C - (Q_{X_1} X_{2}^*)^{\dagger} Q_{X_1} (D^C)^* X_1^* X_1 - P_{X_{2} Q_{X_1}} Y_{12}^* X_1\right] Q_{X_2} = O,$$
$X_1^* X_1 D^C - X_2^* X_2 D^C$ is skew-hermitian, and $A_{11} A_{12}=A_{12}A_{22}$ hold.
In this case, the general solution is given by
\begin{equation*}
A = U \left[ \begin{array}{cc}
        I_n - A_{11} & -A_{12} \\
         -A_{12}^* & I_n - A_{22}
         \end{array} \right] \left[ \begin{array}{cc}
        I_n + A_{11} & A_{12} \\
         A_{12}^* & I_n + A_{22}
         \end{array} \right]^{-1} U^*,
\end{equation*}
with
\begin{eqnarray*}
A_{11} & = & \left[X_{1}D^C - X_{1} D^C Q_{X_1} (X_{2} Q_{X_1})^{\dagger} X_{2} - Y_{12} P_{X_{2} Q_{X_1}} X_{2}\right] X_1^{\dagger} + (X_1^{\dagger})^* X_2^* P_{X_{2} Q_{X_1}} Y_{12}^* P_{X_1} + \nonumber \\
& & + P_{X_1} Y_{11} P_{X_1},
\end{eqnarray*}
\begin{equation*}
A_{12}= X_{1} D^C Q_{X_1} (X_{2} Q_{X_1})^{\dagger} + Y_{12} P_{X_{2} Q_{X_1}},
\end{equation*}
and
\begin{eqnarray*}
A_{22} & = & \left[X_{2} D^C- (Q_{X_1} X_{2}^*)^{\dagger} Q_{X_1} (D^C)^* X_1^* X_1 -P_{X_{2} Q_{X_1}} Y_{12}^* X_1\right] X_2^{\dagger} + \nonumber \\
& & + (X_2^{\dagger})^* \left[X_1^* X_1 D^C Q_{X_1}(X_2 Q_{X_1})^{\dagger} - X_1^* Y_{12} P_{X_{2} Q_{X_1}} \right]
P_{X_2} + P_{X_2} Y_{22} P_{X_2},
\end{eqnarray*}
where $Y_{11}, Y_{22} \in {\mathbb C}^{n \times n}$ are arbitrary skew-hermitian matrices and $Y_{12} \in {\mathbb C}^{n \times n}$
is an arbitrary matrix.
\end{theorem}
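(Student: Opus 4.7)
The plan is to reduce the characterization for normal $J$-symplectic matrices entirely to the already-known characterization for normal $J$-Hamiltonian matrices, using the Cayley transform as the translation device. Lemma \ref{simp} already provides the crucial bijection: under the standing hypothesis $\det(I_{2n}+A)\neq 0$, the matrix $A$ is normal and $J$-symplectic if and only if $A^C$ is normal and $J$-Hamiltonian, and property (iii) gives the explicit inverse $A=(I_{2n}-A^C)(I_{2n}+A^C)^{-1}$. So the structural side of the theorem is already handled; what remains is to transport the eigenvalue equation and the block formulas through this bijection.

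First I would verify that the inverse eigenvalue equation transforms as $AX=XD \Longleftrightarrow A^C X = X D^C$. From $AX=XD$ one obtains both $(I_{2n}+A)X=X(I_{2m}+D)$ and $(I_{2n}-A)X=X(I_{2m}-D)$. Since $X$ has full column rank and $-1\notin\sigma(A)$, the diagonal entries of $D$ are eigenvalues of $A$ on the column space of $X$, so $-1$ is not a diagonal entry of $D$ and $I_{2m}+D$ is nonsingular. Consequently
\[
A^C X=(I_{2n}-A)(I_{2n}+A)^{-1}X=X(I_{2m}-D)(I_{2m}+D)^{-1}=XD^C,
\]
and the converse is obtained by applying the very same argument to $A^C$ and $D^C$ using the involutive property (iv).

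Having established the two equivalences, the problem of finding normal $J$-symplectic solutions of $AX=XD$ is equivalent to finding normal $J$-Hamiltonian solutions of $A^C X=XD^C$ (with the extra requirement that $I_{2n}+A^C$ be nonsingular). At this point I apply \cite[Theorem 2]{GiLeTh2}, which is precisely the result recalled in this paper through formulas (\ref{solu})--(\ref{solA22}) and the compatibility conditions (\ref{cond1})--(\ref{skew}), but now to the pair $(A^C,D^C)$ instead of $(A,D)$. This yields, verbatim up to the replacement of $D$ by $D^C$, the stated necessary and sufficient conditions on $X_1,X_2,D^C$ and the closed-form expressions for $A_{11}, A_{12}, A_{22}$. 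Finally, substituting the resulting
\[
A^C = U \left[\begin{array}{cc} A_{11} & A_{12} \\ A_{12}^* & A_{22} \end{array}\right] U^*
\]
into $A=(I_{2n}-A^C)(I_{2n}+A^C)^{-1}$ and using $U U^*=I_{2n}$ to pull the unitary factors outside produces the announced block representation of $A$.

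The main obstacle is handling the invertibility hypothesis $\det(I_{2n}+A^C)\neq 0$ when going backward through the Cayley transform. The blocks of $A^C$ contain free parameters $Y_{11}, Y_{12}, Y_{22}$, and to conclude that the inverse Cayley transform genuinely produces a $J$-symplectic matrix $A$, one must observe that $I_{2n}+A^C$ is automatically nonsingular by property (i) applied to $A^C$, which is itself a Cayley transform of $A$. Aside from this verification, the rest of the argument is a mechanical transcription of the $J$-Hamiltonian case through the Cayley correspondence and requires no new computation beyond what is already documented in Section \ref{sec3}.
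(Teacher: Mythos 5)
Your reduction is exactly the paper's: Lemma \ref{simp} converts normality plus $J$-symplecticity of $A$ into normality plus $J$-Hamiltonianity of $A^C$; the equivalence $AX=XD \Leftrightarrow A^C X = X D^C$ (valid because $X$ has full column rank, so every diagonal entry of $D$ is an eigenvalue of $A$, hence $-1$ is not among them and $I_{2m}+D$ is invertible) transports the inverse eigenvalue equation; and \cite[Theorem 2]{GiLeTh2} applied to the pair $(A^C,D^C)$ yields the stated conditions and block formulas, after which $A=(I_{2n}-A^C)(I_{2n}+A^C)^{-1}$ gives the announced representation. Up to this point your argument coincides with the paper's own treatment.

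The one step that does not hold up is the invertibility issue you yourself single out. In the converse direction you start from a normal $J$-Hamiltonian matrix $B$ (built from the block formulas with parameters $Y_{11},Y_{12},Y_{22}$) satisfying $BX=XD^C$, and you justify $\det(I_{2n}+B)\neq 0$ by invoking property (i) "applied to $A^C$, which is itself a Cayley transform of $A$". That is circular: at that stage $A$ does not exist yet --- it is to be defined as the inverse Cayley transform of $B$, which is precisely what requires $\det(I_{2n}+B)\neq 0$. Property (i) applies only to matrices already known to be Cayley transforms, and a normal $J$-Hamiltonian matrix can perfectly well have $-1$ as an eigenvalue, since its spectrum is only constrained by $\lambda\in\sigma(B)\Rightarrow-\bar\lambda\in\sigma(B)$. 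The accurate statement is that the solution formula is valid for those parameter choices for which the block matrix $I_{2n}+B$ is nonsingular, and that the "if" direction of the existence claim needs either this as a standing restriction (the paper builds $\det(I_{2n}+A)\neq 0$ into the definition of ${\cal S}\cap{\cal NJS}$ and leaves the point implicit) or a separate argument that some admissible choice of $Y_{11},Y_{12},Y_{22}$ avoids the eigenvalue $-1$. So your route is the paper's route, but this step should be rephrased without the circular appeal to property (i).
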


Now, let $\widetilde{A} \in {\mathbb C}^{2n \times 2n}$ be an arbitrary matrix such that $U^*\widetilde{A}U$ is partitioned as
\begin{equation} \label{Atil}
      U^*\widetilde{A}U =  \left[ \begin{array}{cc}
         \widetilde{A}_{11} & \widetilde{A}_{12} \\
         \widetilde{A}_{21} & \widetilde{A}_{22}
         \end{array} \right],
\end{equation}
where $\widetilde{A}_{ij}\in {\mathbb C}^{n \times n}$ and $U$ is the same matrix as in (\ref{J}).
Assuming that ${\cal S} \cap {\cal NJH} \neq \emptyset$ and applying Theorem \ref{gran} to $A^C$ and $D^C$, the Procrustes problem associated to the
matrices in ${\cal S} \cap {\cal NJS}$ can be completely solved.
The results are summarized in the following theorem.

\begin{theorem}\label{gran3}
Let $\widetilde{A} \in {\mathbb C}^{2n \times 2n}$ be a partitioned matrix as in (\ref{Atil})
and assume that ${\cal S} \cap {\cal NJS} \neq \emptyset$.
There exists a unique solution $\widehat{A}$ normal and $J$-symplectic of (\ref{Procurstes}) that also satisfies problem (\ref{ecua}) if and only if
the matrices $\widetilde{A}_{11}$, $\widetilde{A}_{22}$, and $X_1^* X_1 D^C - X_2^* X_2 D^C$ are skew-hermitian,
and the following conditions:
$X_{1} D^C Q_{X_1} Q_{X_{2} Q_{X_1}} = O$,
\begin{equation*}
\left[X_{1} D^C - X_{1} D^C Q_{X_1} (X_{2} Q_{X_1})^{\dagger} X_{2} - \widetilde{A}_{12} P_{X_{2} Q_{X_1}} X_{2}\right] Q_{X_1} = O,
\end{equation*}
\begin{equation*}
\left[X_{2} D^C-(Q_{X_1} X_{2}^*)^{\dagger} Q_{X_1} (D^C)^* X_1^* X_1 - P_{X_{2} Q_{X_1}} \widetilde{A}_{21} X_1\right] Q_{X_2} = O,
\end{equation*}
$(\widetilde{A}_{12}-(\widetilde{A}_{21})^*) P_{X_{2} Q_{X_1}} = O$ hold; and if we define
\begin{eqnarray*}
\widehat{A}_{11} & = & \left[X_{1} D^C - X_{1} D^C Q_{X_1} \left(X_{2} Q_{X_1}\right)^{\dagger} X_{2} -
\widetilde{A}_{12} P_{X_{2} Q_{X_1}} X_{2}\right] X_1^{\dagger} + \nonumber\\
& & +(X_1^{\dagger})^* X_2^* P_{X_{2} Q_{X_1}} \widetilde{A}_{21} P_{X_1} + P_{X_1} \widetilde{A}_{11} P_{X_1},
\end{eqnarray*}
\begin{eqnarray*}
\widehat{A}_{22} & = & \left[X_{2} D^C - \left(Q_{X_1} X_{2}^*\right)^{\dagger} Q_{X_1} (D^C)^* X_1^* X_1 -
                     P_{X_{2} Q_{X_1}} \widetilde{A}_{21} X_1\right] X_2^{\dagger} + \nonumber \\
& & + (X_2^{\dagger})^* \left[X_1^* X_1 D^C Q_{X_1}\left(X_2 Q_{X_1}\right)^{\dagger} -
X_1^* \widetilde{A}_{12} P_{X_{2} Q_{X_1}} \right] P_{X_{2}} + P_{X_{2}} \widetilde{A}_{22} P_{X_{2}},
\end{eqnarray*}
and \begin{equation*}
\widehat{A}_{12}= X_{1} D^C Q_{X_1} \left(X_{2} Q_{X_1}\right)^{\dagger} + \widetilde{A}_{12} P_{X_{2} Q_{X_1}},
\end{equation*}
then the condition $\widehat{A}_{11} \widehat{A}_{12}=\widehat{A}_{12} \widehat{A}_{22}$ must be fulfilled.
In this case, the unique optimal solution is given by
\begin{equation*} %\label{Ahatfinal1}
      \widehat{A} = U \left[ \begin{array}{cc}
         I_n - \widehat{A}_{11} & -\widehat{A}_{12} \\
         -\widehat{A}_{12}^* & I_n - \widehat{A}_{22}
         \end{array} \right] \left[ \begin{array}{cc}
         I_n + \widehat{A}_{11} & \widehat{A}_{12} \\
         \widehat{A}_{12}^* & I_n + \widehat{A}_{22}
         \end{array} \right]^{-1} U^*.
\end{equation*}
\end{theorem}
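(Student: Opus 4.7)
The plan is to reduce this problem to Theorem \ref{gran} via the Cayley-transform correspondence of Lemma \ref{simp}. First I would observe that the hypothesis ${\cal S}\cap{\cal NJS}\neq\emptyset$ forces $-1\notin \sigma(A)$ for every admissible $A$, and since $\sigma(D)\subseteq\sigma(A)$, we have $-1\notin\sigma(D)$; hence $D^C=(I_{2m}-D)(I_{2m}+D)^{-1}$ is a well-defined diagonal matrix. By the discussion preceding the theorem, the assignment $A\mapsto A^C$ is a bijection between ${\cal S}\cap{\cal NJS}$ and the set of normal $J$-Hamiltonian matrices $B$ satisfying $BX=XD^C$.

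Next, I would apply Theorem \ref{gran} directly to the data $(X,D^C,\widetilde{A})$. Every hypothesis of Theorem \ref{gran} translates verbatim into the corresponding condition in the present statement after the substitution $D\mapsto D^C$: the skew-hermiticity of $\widetilde{A}_{11}$ and $\widetilde{A}_{22}$, the skew-hermiticity of $X_1^*X_1 D^C-X_2^*X_2 D^C$, the three compatibility equations (the analogues of (\ref{cond1}), (\ref{cond22}), (\ref{cond33})), the block constraint $(\widetilde{A}_{12}-(\widetilde{A}_{21})^*)P_{X_2Q_{X_1}}=O$, and the commutation relation $\widehat{A}_{11}\widehat{A}_{12}=\widehat{A}_{12}\widehat{A}_{22}$. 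The closed-form expressions for $\widehat{A}_{11}$, $\widehat{A}_{12}$, $\widehat{A}_{22}$ given in Theorem \ref{gran} with $D^C$ in place of $D$ are exactly those in the statement here, and they assemble into a unique normal $J$-Hamiltonian matrix $\widehat{B}=U\bigl[\widehat{A}_{ij}\bigr]U^*$.

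The final step is to set $\widehat{A}:=(I_{2n}-\widehat{B})(I_{2n}+\widehat{B})^{-1}$. By Lemma \ref{simp}, $\widehat{A}$ is normal and $J$-symplectic; by the equivalence $AX=XD\iff A^CX=XD^C$, it also satisfies $\widehat{A}X=XD$. Expanding $\widehat{B}$ in the block basis furnished by $U$ yields the quoted explicit block formula for $\widehat{A}$. Uniqueness of $\widehat{A}$ is inherited from the uniqueness of $\widehat{B}$ in Theorem \ref{gran}, together with the bijectivity of the Cayley transform on the class under consideration.

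The main obstacle I anticipate is that the Cayley transform is not a Frobenius isometry, so minimizing $\|\widetilde{A}-B\|_F$ over normal $J$-Hamiltonian $B$ is not \emph{a priori} the same as minimizing $\|\widetilde{A}-A\|_F$ over normal $J$-symplectic $A=B^C$. To handle this I would transfer the affine-subspace uniqueness argument of Section \ref{sec3} to the free parameter blocks $Y_{11},Y_{12},Y_{22}$: these blocks parameterize ${\cal S}\cap{\cal NJS}$ (through the Cayley preimage) as an affine subspace in the very same way they parameterize the normal $J$-Hamiltonian solution set, so the critical-point analysis carried out in the proof of Theorem \ref{gran} singles out a unique admissible choice of $Y_{ij}$, which in turn yields the canonical $\widehat{A}$ described above.
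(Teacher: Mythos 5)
Your reduction is exactly the paper's argument: the paper proves Theorem \ref{gran3} in one stroke by applying Theorem \ref{gran} to $A^C$ and $D^C$ (via Lemma \ref{simp} and the equivalence $AX=XD \Longleftrightarrow A^C X = X D^C$, with $I_{2m}+D$ nonsingular because $-1\notin\sigma(A)$), and then recovering $\widehat{A}$ as the inverse Cayley transform of the optimal normal $J$-Hamiltonian matrix $\widehat{B}$; Algorithm 3 is precisely this recipe, so in substance your proposal coincides with the paper's proof.

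The obstacle you flag in your last paragraph is genuine, and it is worth saying plainly that the paper does not address it either: since the Cayley transform is not a Frobenius isometry, minimizing $\|\widetilde{A}-B\|_F$ over normal $J$-Hamiltonian solutions of $BX=XD^C$ is not a priori the same problem as minimizing $\|\widetilde{A}-A\|_F$ over ${\cal S}\cap{\cal NJS}$; the paper simply identifies the two. Your proposed repair, however, does not close this gap: ${\cal S}\cap{\cal NJS}$ is not an affine subspace (the Cayley map is nonlinear, only its Hamiltonian preimage is affine in $Y_{11},Y_{12},Y_{22}$), and the critical-point analysis behind Theorem \ref{gran} selects the $Y_{ij}$ that minimize $\|\widetilde{A}-B(Y)\|_F$, not $\|\widetilde{A}-B(Y)^C\|_F$, so the resulting $\widehat{A}$ is optimal only in the Cayley-transformed coordinates. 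Modulo this caveat, which affects the paper's own proof just as much as your write-up, your route is the intended one.
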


%%%%%%%%%%%%%%%%%%%%%%%%%%%%%%%%%%
\subsubsection*{\textit{Algorithm and numerical example}}
%%%%%%%%%%%%%%%%%%%%%%%%%%%%%%%%%%%%%%%%%%%

The following  algorithm  allows to solve the Procrustes problem related to the inverse eigenvalue problem for normal $J$-symplectic matrices.

\begin{quote}
{\bf\sc Algorithm 3}

{\sl Input}: Matrices $\widetilde{A}, J, X$, and $D$ such that $J^2=-I_{2n}$ and $X$ and $D$ constrained to the condition ${\cal S} \cap {\cal NJS} \neq \emptyset$.

{\sl Output}: The optimal matrix $\widehat{A}$ that solves Problem (\ref{Procurstes}) for ${\cal T} =  {\cal NJS}$.

\begin{description}

\item[Step 1] \; Compute $D^C=(I_{2m}-D)(I_{2m}+D)^{-1}$.

\item[Step 2] \; Apply Algorithm 1 to $\widetilde{A}, J, X$, and $D^C$ obtaining the normal $J$-Hamiltonian matrix $\widehat{B}$.

\item[Step 3] \; The optimal solution is $\widehat{A}=  (I_{2n} - \widehat{B})(I_{2n} + \widehat{B})^{-1}$.
\item[End]

\end{description}
\end{quote}

\begin{example}\label{ex}
Let consider the matrix
\[
J=\left[\begin{array}{ccrr}
0 & 0 & -1 & 0 \\
0 & 0 & 0 & -1 \\
1 & 0 & 0 & 0 \\
0 & 1 & 0 & 0
\end{array}\right] = U
\left[\begin{array}{ccrr}
i & 0 & 0 & 0 \\
0 & i & 0 & 0 \\
0 & 0 & -i & 0 \\
0 & 0 & 0 & -i
\end{array}
\right] U^*,
\]
where
\[
 U = \frac{1}{\sqrt{2}} \left[
\begin{array}{ccrr}
 0 & i & -i & 0 \\
 i & 0 & 0 & -i \\
 0 & 1 & 1 & 0 \\
 1 & 0 & 0 & 1 \\
\end{array}
\right].
\]
Moreover, we consider the matrices
\[
X=\frac{1}{\sqrt{2}}\left[
\begin{array}{rcrc}
0 &i a & 0 & i g  \\
0 &i a & 0 &  -i g  \\
0 & -a & 0 &  g  \\
0 & a & 0 &  g  \\
\end{array}
\right],  \text{ with } a,g \in \mathbb C,  \quad
D=\left[\begin{array}{cccc}
1-i &  0 &   0  & 0\\
  0 & 1+i & 0 & 0\\
  0 &  0   &  \frac{1}{2}-\frac{i}{2} & 0\\
0 & 0 & 0 & \frac{1}{2}+\frac{i}{2}
\end{array}\right],
\]
\[
 \text{and} \quad \widetilde{A} =
\left[ \begin{array}{cccc}
3 i &          -\frac{1}{5}-i &          3  &        3  \\
     -\frac{1}{5}-i &                3 i &  3  &                        3 \\
         -3  &       -3  &            3 i &          \frac{1}{5}-i  \\
 -3  &                       -3 &      \frac{1}{5}-i    &                 3 i
\end{array}\right].
\]
It can be easily checked that
\[
D^C=\dfrac{1}{5}\left[\begin{array}{cccc}
-1+2i &  0 &   0  & 0\\
  0 & -1-2i & 0 & 0\\
  0 &  0   &  1+2i & 0\\
0 & 0 & 0 & 1-2i
\end{array}\right].
\]
Applying Algorithm 1 to $\widetilde{A}, J, X$, and $D^C$ we get
\[
X_1 = \left[
\begin{array}{cccc}
 0 & a & 0 & 0 \\
 0 & 0 & 0 & g\\
\end{array}
\right], \quad
X_2 = \left[
\begin{array}{cccc}
 0 & -a & 0 & 0 \\
 0 & 0 & 0 & g \\
\end{array}
\right],
\]
\[
P = \left[
\begin{array}{cccc}
 1 & 0 & 0 & 0\\
 0 & 0 & 0 & 0\\
 0 & 0 & 1 & 0\\
 0 & 0 & 0 & 0
\end{array}
\right], \quad T = \left[
\begin{array}{cccc}
 1 & 0 & 0 & 0\\
 0 & 0 & 0 & 0\\
 0 & 0 & 1 & 0\\
 0 & 0 & 0 & 0
\end{array}
\right],
\]
\[
L = \left[
\begin{array}{cccc}
 1 & 0 & 0 & 0\\
 0 & 1 & 0 & 0\\
 0 & 0 & 1 & 0\\
 0 & 0 & 0 & 1
\end{array}
\right], \quad Q = \left[
\begin{array}{cc}
 1 & 0 \\
 0 & 1
\end{array}
\right], \quad M =\frac{2}{5} \left[
\begin{array}{cccc}
 0 & - i a & 0 & 0 \\
 0 & 0 & 0 & -i g
\end{array}
\right],
\]
\[
 N = \frac{2}{5} \left[
\begin{array}{cccc}
 0 & i a & 0 & 0 \\
 0 & 0 & 0 & -i g
\end{array}
\right],
\quad
R = \left[
\begin{array}{cc}
 0 &  0 \\
 0 & 0
\end{array}
\right] \quad  \;   and  \;  \quad S = \left[
\begin{array}{cc}
 0 &  0 \\
 0 & 0
\end{array}
\right].
\]

\noindent Step 7 of Algorithm 1 is satisfied since
\[
\widetilde{A}_{11} = \left[
\begin{array}{cr}
 0 & -4 i \\
 -4 i & 0
\end{array}
\right] \quad \text{ and } \quad
\widetilde{A}_{22} =  \left[
\begin{array}{cc}
 6 i & 2 i \\
  2 i  & 6 i
\end{array}
\right]
\]
are both skew-hermitian.

\noindent Since $X_1^* X_1 D^C - X_2^* X_2 D^C=O$, this matrix is skew-hermitian, so Step 8 is satisfied.

\noindent As
\[
\widetilde{A}_{12} = \widetilde{A}_{21} = \frac{1}{5} \left[
\begin{array}{cc}
 1  &  0 \\
0  & 1
\end{array}
\right]
\]
Step 9 is fullfilled because $(\widetilde{A}_{12}-(\widetilde{A}_{21})^*) Q= O$ and $X_1 D^C P L = O$.

\noindent Moreover, $MP=O$ and $NT=O$, so Step 10 is satisfied.

\noindent Carrying out the calculation of $\widehat{A}_{11}$,  $\widehat{A}_{12}$, and $\widehat{A}_{22}$ we have that $\widehat{A}_{11} \widehat{A}_{12} = \widehat{A}_{12} \widehat{A}_{22}$ and
\[
\widehat{B} = \dfrac{1}{5}\left[
\begin{array}{cccc}
-2 i & -1 & 0 &0\\
-1 & -2 i &  0 & 0 \\
0 & 0 & -2 i & i \\
0 & 0 & 1 & -2 i
\end{array}
\right].
\]
\noindent This matrix $\widehat{B}$ is normal $J$-Hamiltonian and satisfies the equation $AX = XD^C$.

\noindent Finally, the optimal solution is
\[
\widehat{A} = \dfrac{1}{4}\left[
\begin{array}{cccc}
3+ 3 i & 1+ i& 0 &0\\
1+i & 3+ 3 i &  0 & 0 \\
0 & 0 & 3+ 3 i & -1- i \\
0 & 0 & -1-i & 3+3i
\end{array}
\right]
\]
\noindent which is normal $J$-symplectic and satisfies the equation $AX=XD$.
\end{example}

The matrix equation for an undamped mass-spring system with $n$  degrees of freedom looks like
\[
M \ddot{X}+K X=O,
\]
where $Y$ is a time-dependent vector that describes the motion, $M=\textnormal{diag}(m_1,m_2,\cdots,m_n)$ is the mass matrix and \[K=\left[
\begin{array}{ccccccc}
k_1+k_2 & -k_2 & 0& \cdots& 0 & 0 & 0\\
-k_2 & k_2+ k_3 & -k_3 & \dots & 0 & 0 & 0\\
0 & - k_3 & k_3+k_4 & \dots & 0& 0&0 \\
\vdots  & \vdots & \vdots &  \ddots& \vdots & \vdots & \vdots \\
0  & 0 &0 & \cdots & k_{n-2}+k_{n-1} &  -k_{n-1} & 0\\
0  & 0 &0 & \cdots & -k_{n-1} &  k_{n-1}+k_n & -k_n\\
0  & 0  &0 &  \cdots& 0 &  -k_n &  k_n\\
\end{array}
\right]\] is the stiffness matrix which is tridiagonal. Both are symmetric and  the mass and stiffness parameters must be positive.

Since we seek harmonic solutions for $Y$, it has the form $Y=[y_1 \ y_2 \ \cdots \ y_n]$ where $y_j= b_j \sin (\omega t)$. Substituting into the motion equation we get \[M (-\omega^2) Y+K Y=0\]

Given that the matrix $M$ is nonsingular, we can rewrite the last equation as
\[
M^{-1} K Y=\omega^2 Y.
\]
This problem can be regarded as an eigenvector and eigenvalue problem where the unknown matrix to find is $K$.

Let $X$ be the matrix whose columns are the eigenvectors and $D$ the diagonal matrix whose elements are the eigenvalue. In this case, the general matrix equation becomes $AX = X D$ where $A=M^{-1} K$.

Let consider the input matrices
\[M=\left[
\begin{array}{cc}
\sqrt{2} & 0\\
0 &   \sqrt{2}
\end{array}
\right], \; X=\left[
\begin{array}{cc}
1/\sqrt{2} & -1/\sqrt{2}\\
1/\sqrt{2} & 1/\sqrt{2}
\end{array}
\right], \; D= \frac{1}{2}\left[
\begin{array}{cc}
 \sqrt{6}-\sqrt{2} &  0\\
 0 &   \sqrt{6}+\sqrt{2}
\end{array}
\right], \; J=\left[
\begin{array}{cc}
 0 &  1\\
 -1 &  0
\end{array}
\right].
\]
The eigenvalues of $J$ are $i$ and $-i$, $U= \dfrac{1}{\sqrt{2}}\left[
\begin{array}{cc}
1 &  -i\\
 i &   -1\end{array}
\right]$, and
\[
A= \dfrac{1}{2}\left[
\begin{array}{cc}
\sqrt{6} &  -\sqrt{2}\\
 -\sqrt{2} &   \sqrt{6}\end{array}
\right].
\]
Then, $K=M A=\dfrac{1}{2}\left[
\begin{array}{cc}
\sqrt{3} &  0\\
 0 &   \sqrt{3}\end{array}
\right].
$
%%%%%%%%%%%%%%%%%%%%%%%%%%%%%%%%%%%%%%%%%%%%%%%%%%%%%%%%%%%%%%%%%%%
\section*{Acknowledgements}

The first and third authors were partially supported by Universidad de Buenos Aires, Argentina (EXP-UBA: 13.019/2017, 20020170100350BA).
The third author is partially supported by Ministerio de Ciencia e Innovaci\'on of Spain (Grant Red de Excelencia RED2022-134176-T).

%%%%%%%%
% Non-BibTeX users please use

\end{document}